\theoremstyle{plain}
\newtheorem{theorem}{Theorem}[section]
\newtheorem{lemma}[theorem]{Lemma}
\newtheorem{proposition}[theorem]{Proposition}
\theoremstyle{definition}
\begin{document}
%


\title{Team Semantics and Recursive Enumerability}


%
%
\author{Antti Kuusisto%
}

\newcommand{\df}{{D}}
\newcommand{\ifl}{{IF}}
\newcommand{\dtwo}{{\df^2}}
\newcommand{\iftwo}{{\ifl^2}}
\newcommand{\eso}{{ESO}}
\newcommand{\fo}{{FO}}
\newcommand{\fotwo}{{\fo^2}}
\newcommand{\foc}{\logic{FOC}}
\newcommand{\foctwo}{\logic{FOC^2}}
\newcommand{\mA}{{\mathfrak{A}}}
\newcommand{\mB}{{\mathfrak{B}}}
\newcommand{\mG}{{\mathfrak{G}}}
\newcommand{\mH}{{\mathfrak{H}}}
\newcommand{\mD}{{\mathfrak{D}}}
\newcommand{\mM}{{\mathfrak{M}}}
\newcommand{\sat}[1][]{\commandOperator{\problem{Sat}}{#1}}
\newcommand{\finsat}[1][]{\commandOperator{\problem{FinSat}}{#1}}
\newcommand{\calL}{\literal{\mathcal{L}}}
\newcommand{\tiling}[1][]{\commandOperator{\problem{Tiling}}{#1}}
\newcommand{\torus}{\literal{\mathrm{Torus}}}
\newcommand{\I}{\literal{\mathrm{I}}}
%

%
%

\date{}

\author{Antti Kuusisto\\
{\small University of Wroc\l aw and Technical University of Denmark}}
\maketitle

\begin{abstract}
\noindent
It is well known that dependence logic captures the complexity class NP, and it has recently been shown that inclusion logic captures P on ordered models. These results demonstrate that team semantics offers interesting new possibilities for descriptive complexity theory. In order to properly understand the connection between team semantics and descriptive complexity, we introduce an extension $\mathrm{D}^*$ of dependence logic that can define exactly all recursively enumerable classes of finite models.
Thus $\mathrm{D}^*$ provides an approach to computation alternative to Turing machines.
The essential novel feature in $\mathrm{D}^*$ is an operator that can extend the domain of the
considered model by a finite number of fresh elements.
%
Due to the close relationship between generalized quantifiers and oracles,
we also investigate generalized quantifiers in team semantics.
We show that monotone  
quantifiers of type (1) can be canonically eliminated from quantifier extensions of
first-order logic by introducing corresponding generalized dependence atoms. 
\end{abstract}

\section{Introduction}

In this article we study logics based on \emph{team semantics}.
Team semantics was originally conceived by Hodges
\cite{ho97} in the context of  IF-\emph{logic} \cite{hisa89}.
On the intuitive level, team semantics provides an alternative compositional approach
to systems based on game-theoretic semantics. The compositional approach
simplifies the more traditional game-theoretic approaches in several ways.
In \cite{va07}, V\"{a}\"{a}n\"{a}nen introduced \emph{dependence logic} ($\mathrm{D}$),
which is a novel approach to IF-logic based on new atomic formulae $=\hspace{-1.1mm}(x_1,...,x_k,y)$
that can be interpreted to mean that the choice for the value of $y$ is
\emph{functionally determined} by
the choices for the values of $x_1,...,x_k$ in a semantic game.
After the introduction of dependence logic, research on logics based on team semantics
has been \emph{very active.}
Several different logics with different applications have been investigated.
Currently the two most important systems studied in the field in addition to 
dependence logic are \emph{independence logic} \cite{independencelogic} of 
Gr\"{a}del and V\"{a}\"{a}n\"{a}nen and
\emph{inclusion logic} \cite{inclusionlogic} of Galliani.
Independence logic is a
variant of dependence logic that extends first-order logic by
new atomic formulae $x_1,...,x_k\, \bot\, y_1,...,y_k$
with the intuitive meaning that the interpretations of
the variables $x_1,...,x_k$ are \emph{independent} of
the interpretations of the variables $y_1,...,y_k$.
Inclusion logic extends first-order logic by
atomic formulae $x_1,...,x_k\, \subseteq\, y_1,...,y_k$,
whose intuitive meaning is that each tuple interpreting
the variables $x_1,...,x_k$ must also be a
tuple that interprets $y_1,...,y_k$.
Exclusion logic, also introduced in \cite{inclusionlogic} by Galliani,
is a natural counterpart of inclusion logic with atoms 
$x_1,...,x_k\, |\, y_1,...,y_k$ which state that
the set of tuples interpreting $x_1,...,x_k$ must not overlap
with the set of tuples interpreting $y_1,...,y_k$.
%


%
It was observed in \cite{va07} and
\cite{independencelogic} that dependence logic and
independence logic are both equi-expressive with \emph{existential
second-order logic},
and thereby capture $\mathrm{NP}$.
Curiously, it was established in \cite{GH2013}
that inclusion logic is equi-expressive with
\emph{greatest fixed point logic} 
and thereby captures $\mathrm{P}$ on finite ordered models.
These results show that team semantics offers a
novel interesting perspective on descriptive complexity theory.
Especially the very close connection between team semantics
and game-theoretic concepts is interesting in this context.
In order \emph{properly understand} the
perspective on descriptive complexity provided
by team semantics, it makes
sense to accomodate the related logics in a
unified umbrella framework that 
exactly characterizes the computational
capacity of Turing machines.
It turns out that there exists a particularly simple
extension of dependence logic that does the job.
Let $\mathrm{D}^*$ denote the logic 
obtained by extending first-order logic
by the atoms of dependence, independence, inclusion, and exclusion
logic, and furthermore, an operator $\mathrm{Ix}$
that extends the domain of the model considered
by a finite number of \emph{fresh} elements.
We show below that $\mathrm{D}^*$ can define exactly
all recursively enumerable classes of finite models.
Since $\mathrm{D}^*$ captures RE, it is not only a
logic but also a model of computation. The
striking simplicity of $\mathrm{D}^*$
and the link between team semantics and game-theory
make $\mathrm{D}^*$ a particularly
interesting system.
There of course exist other logical frameworks
where RE can be easily captured, such as
%
%
%
%
%
abstract state machines
\cite{guuure1}, \cite{borger}
and the recursive games of
\cite{kuusitur}.
However, $\mathrm{D}^*$
provides a
\emph{simple unified perspective on recent advances in descriptive
complexity based on team semantics.}
The framework of \cite{kuusitur}
resembles $\mathrm{D}^*$
since it provides a perspective on RE
that explains computational notions
via game-theoretic concepts, but 
the approach in \cite{kuusitur}
uses potentially infinite games
(and the article \cite{kuusitur} also lacks a compositional approach). 
The approach provided by
$\mathrm{D}^*$ is different.
%

%
%
%
The notion of a \emph{generalized quantifier} can be seen as a
natural logical counterpart of the computationally
motivated notion of an \emph{oracle}.
Generalized quantifiers have recently been studied from
the point of view of team semantics in
\cite{fredrik}, \cite{freko}, \cite{frekova}, \cite{kuusiquant}.
The focus has been on \emph{monotone} quantifiers.
We establish that type ($1$) monotone generalized quantifiers can be
canonically eliminated from extensions of first-order logic by
introducing corresponding \emph{generalized dependence atoms}.
This result demonstrates that 
%
%
team semantics indeed provides a natural
approach to descriptive complexity theory
and computation in general.

\section{Preliminaries}\label{preliminaries}\label{prelies}
We consider only models with a 
\emph{purely relational vocabulary}, i.e., a
vocabulary consisting of relation symbols only.
Therefore, all vocabularies
are below assumed to be purely relational without further warning. 
We let $\mathfrak{A}$, $\mathfrak{B}$, $\mathfrak{C}$, etc., 
denote models; $A$, $B$ and $C$ denote the domains of the models
$\mathfrak{A}$, $\mathfrak{B}$ and $\mathfrak{C}$, respectively.
%

%
%
%
%


%
%
%
We let $\mathrm{VAR}$
denote a countably infinite
%
%
set of  exactly all first-order \emph{variable symbols}.
%
%
%
%
%
%
Let $X\subseteq\mathrm{VAR}$ be a \emph{finite}, possibly empty set.
%
%
%
%
Let $A$ be a set.
A function $s:X\rightarrow A$ is called an
\emph{assignment} with domain $X$ and codomain $A$.
%
%
%
%
We let $s[a/x]$ denote the assignment 
with domain $X\cup\{x\}$ and 
codomain $A\cup\{a\}$ defined such that 
$s[a/x](y) = a$ if $y = x$,
and 
$s[a/x](y) = s(a)$ if $y \not= x$.
Let $T$ be a set.
%
%
%
%
We define 
$s[\,  T/x\, ]\ =\ \{\ s[a/x]\ |\ a\in T\ \}.$
%
%
%
%

%
Let $X\subseteq\mathrm{VAR}$ be a finite, possibly empty set.
%
%
Let $U$ be a set of assignments
$s:X\rightarrow A$. Such a set $U$ is a \emph{team}
with \emph{domain} $X$ and codomain $A$.
Note that the empty set is a team with codomain $A$, as is the set $\{\emptyset\}$
containing only the empty assignment.
The team $\emptyset$ does not have a unique
domain; any finite subset of $\mathrm{VAR}$ is a
domain of $\emptyset$.
The domain of the team $\{\emptyset\}$ is $\emptyset$.
The domain of team $U$ is denoted by $\mathit{Dom}(U)$.
%
%
%
%
Let $T$ be a set.
We define $U[\, T/x\, ] := \{\ s[a/x]\ |\ a\in T,\ s\in U\ \}$.
%
%
%
%
%
Let $f:U\rightarrow \mathcal{P}(T)$ be a function,
where $\mathcal{P}$ denotes the power set operator.
%
%
We define
$U[\, f/x\, ] :=\bigcup\limits_{s\, \in\, U}\ s[\, f(s)/x\, ].$
Let $V$ be a team. Let $k\in\mathbb{Z}_+$,
where $\mathbb{Z}_+$ denotes the  positive integers.
Let $x_1,...,x_k\in\mathit{Dom}(V)$. Define
$\mathit{Rel}\bigl(V,(x_1,...,x_k)\bigr) := \{\bigl(s(x_1),...,s(x_k)\bigr)\ |\ s\in V\}.$
%
%
%
%

%
We then define \emph{lax team semantics}
for formulae of first-order logic ($\mathrm{FO}$).
%
%
As usual in investigations related to team semantics,
formulae are assumed to be in \emph{negation normal form},
i.e., negations occur only in front of atomic formulae.
Let $\mA$ be a model and $U$ a team with codomain $A$.
Let $\models_{\mathrm{FO}}$ denote the ordinary Tarskian satisfaction relation of first-order logic,
i.e., $\mathfrak{A},s\models_{\mathrm{FO}}\varphi$ means that the model $\mathfrak{A}$
satisfies the first-order formula $\varphi$ under the assignment $s$.
We define 
\[
\begin{array}{lll}
 \mathfrak{A},U\models x=y & \ \Leftrightarrow \ & 
          \forall s\in U\bigl(\mathfrak{A},s\models_{\mathrm{FO}} x=y\bigr),\\
 \mathfrak{A},U\models \neg x=y & \ \Leftrightarrow \ & 
          \forall s\in U\bigl(\mathfrak{A},s\models_{\mathrm{FO}} \neg x=y\bigr),\\
%
%
%
\mathfrak{A},U\models R(x_1,...,x_k)& \ \Leftrightarrow \ & 
          \forall s\in U\bigl(\mathfrak{A},s\models_{\mathrm{FO}} R(x_1,...,x_k)\bigr),\\
 \mathfrak{A},U\models \neg R(x_1,...,x_k)& \ \Leftrightarrow \ & 
          \forall s\in U\bigl(\mathfrak{A},s\models_{\mathrm{FO}}\neg R(x_1,...,x_k)\bigr),\\
%
%
   \mathfrak{A},U\models(\varphi\wedge\psi) & \ \Leftrightarrow \ &
           \mathfrak{A},U\models\varphi\text{ and }
          \mathfrak{A},U\models\psi,\\
   \mathfrak{A},U\models(\varphi\vee\psi) & \ \Leftrightarrow \ &
%
%
%
           \mathfrak{A},U_0\models\varphi\text{ and }
          \mathfrak{A},U_1\models\psi\text{ for some }\\
         & &\text{teams }U_0,U_1\subseteq U\text{ such that }
          U_0\cup U_1 = U,\\
   \mathfrak{A},U\models\forall x\, \varphi& \ \Leftrightarrow \ &
           \mathfrak{A},U[\, A/x\, ]\models\varphi,\\
           \mathfrak{A},U\models\exists x\, \varphi
& \ \Leftrightarrow \ & \mathfrak{A}, [\, f/x\, ]\models\varphi\text{ for some }
                         f:U\rightarrow (\mathcal{P}(A)\setminus\emptyset).
%
%
\end{array}
\]
%
%
%
%
%
%
%
%
%
%
%
%
%
%
%
%
%
%
%
A sentence $\varphi$ is true in $\mA$  ($\mA\models \varphi$)
if $\mA,\{\emptyset\}\models\varphi$. 
%
%
It is well known and easy to show that
for an $\mathrm{FO}$-formula $\varphi$, we have 
$\mA ,U\models \varphi$ iff\, 
$\mA,s \models_{\mathrm{FO}} \varphi$ for all $s\in U$.
%

%
%

%
Let $(i_1,...,i_n)$ be a non-empty sequence of positive integers.
A \emph{generalized quantifier} of the type $(i_1,...,i_n)$ is a class $\mathcal{C}$ of structures
$(A,B_1,...,B_n)$ such that the following conditions hold.
\begin{enumerate}
\item
$A\not=\emptyset$,
and for each $j\in\{1,...,n\}$, we have $B_j \subseteq A^{i_j}$.
\item
If $(A',B_1',...,B_n')\in\mathcal{C}$, and if there is an isomorphism $f:A'\rightarrow A''$
from $(A',B_1',...,B_n')$ to $(A'',B_1'',...,B_n'')$,
then $(A'',B_1'',...,B_n'')\in\mathcal{C}$.
\end{enumerate} 
Let $Q$ be a quantifier of the type $(i_1,...,i_n)$.
Let $A\not=\emptyset$ be a set.
We define $Q^{A}$ to be the set
$\{\ (B_1,...,B_n)\ |\ (A,B_1,...,B_n)\in Q\ \}.$
If $Q$ is a quantifier of type $(1)$,
we define $Q' := \{\ (A,B)\ |\ (A,A\setminus B)\in Q\ \}$.
A quantifier $Q$ of type $(1)$
is said to be \emph{monotone},
if the condition
$ A\subseteq B\ \Rightarrow\ (A\in Q^C\ \Rightarrow\ B\in Q^C)$
holds for all $C\not=\emptyset$ and $A,B\subseteq C$.
Let us next see how the ordinary Tarskian semantic relation $\models_{\mathrm{FO}}$
is extended to deal with languages with generalized quantifiers of type $(1)$.
Let $Q$ be a quantifier of type $(1)$.
Let $\mathrm{FO}(Q)$ denote the extension of
$\mathrm{FO}$ obtained by 
adding a new formula formation rule that constructs $Qx\, \varphi$ from $\varphi$.
Let $\mA$ be a model and $s$ an assignment with codomain $A$.
We define $\mA,s\models_{\mathrm{FO}} Qx\, \varphi$ iff
we have $\{\ a\in A\ |\ \mA,s[ a/x ]\models_{\mathrm{FO}}\varphi\ \}\ \in\ Q^A.$
Note that the formula $\neg Qx\, \varphi$ is equivalent to $Q^d\, x\, \neg\varphi$,
where $(\cdot)^d$ denotes the \emph{dualizing operation}
defined such that $Q^{d} := \{\ (C,D)\ |\ (C,C\setminus D)\not\in Q\ \}$.
Note also that $(Q^d)^d = Q$,
and that if $Q$ is monotone, then so is $Q^d$.
Thus a language of first-order logic extended with
monotone type $(1)$ quantifiers can be represented in negation normal form such that
the resulting language is also essentially an extension
of $\mathrm{FO}$ by monotone type $(1)$ quantifiers.
We next show how to extend lax team semantics to first-order logic
with monotone generalized quantifiers of type $(1)$.
(\emph{We investigate only quantifiers of type $(1)$ for the sake of simplicity and brevity; a
somewhat more general approach will be taken up in the journal version.})
As usual, formulae are taken to be in negation normal form.
We define (cf. \cite{fredrik}) that $\mathfrak{A},U\models Qx\, \varphi$
iff there exists a function $f:U\rightarrow Q^A$ such 
that $\mathfrak{A},U[\, f/x\, ]\models \varphi$.
The following proposition from \cite{fredrik} is straightforward to prove by induction on
the structure of formulae.
\begin{proposition}\label{first-ordercorrespondence}
Let $\varphi$ be a formula of first-order logic extended with generalized
quantifiers. Let $U$ be a team. Then
$\mathfrak{A},U\models\varphi
\text{ iff }\, 
\forall s\in U(\mathfrak{A},s\models_{\mathrm{FO}}\varphi).$ 
\end{proposition}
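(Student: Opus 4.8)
The plan is to prove Proposition~\ref{first-ordercorrespondence} by induction on the structure of $\varphi$, exploiting the fact that in lax team semantics every connective and (generalized) quantifier for this fragment distributes over the assignments of the team. First I would record the base cases: for the literals $x = y$, $\neg x = y$, $R(x_1,\dots,x_k)$, $\neg R(x_1,\dots,x_k)$ the equivalence is immediate, since the clauses defining $\mathfrak{A},U\models\alpha$ for atomic $\alpha$ are \emph{by definition} the statement $\forall s\in U(\mathfrak{A},s\models_{\mathrm{FO}}\alpha)$. Note also the trivial but useful observation that the claim holds vacuously for $U=\emptyset$, and that $\mathfrak{A},\emptyset\models\varphi$ for every $\varphi$ in this fragment, which I would verify in passing (the empty split, the empty function, and $\emptyset[A/x]=\emptyset$ all work), since this fact is what makes the existential and disjunction steps go through smoothly.

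Next I would handle the connectives. For $\varphi = (\psi\wedge\theta)$ the induction hypothesis gives $\mathfrak{A},U\models\psi\wedge\theta$ iff $\forall s\in U(\mathfrak{A},s\models_{\mathrm{FO}}\psi)$ and $\forall s\in U(\mathfrak{A},s\models_{\mathrm{FO}}\theta)$, which is exactly $\forall s\in U(\mathfrak{A},s\models_{\mathrm{FO}}\psi\wedge\theta)$. For $\varphi=(\psi\vee\theta)$ the right-to-left direction takes the split $U_0=U_1=U$ (using the induction hypothesis on each), and the left-to-right direction: from a split $U_0\cup U_1=U$ with $\mathfrak{A},U_0\models\psi$ and $\mathfrak{A},U_1\models\theta$, the induction hypothesis yields $\mathfrak{A},s\models_{\mathrm{FO}}\psi$ for $s\in U_0$ and $\mathfrak{A},s\models_{\mathrm{FO}}\theta$ for $s\in U_1$, so every $s\in U$ satisfies $\psi\vee\theta$. (This is where closure under subteams, via the empty-team remark, keeps the argument clean.)

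The quantifier cases are the heart of the matter. For $\varphi=\forall x\,\psi$ I would argue that $\mathfrak{A},U\models\forall x\,\psi$ iff $\mathfrak{A},U[A/x]\models\psi$ iff (induction hypothesis) $\forall s'\in U[A/x](\mathfrak{A},s'\models_{\mathrm{FO}}\psi)$ iff $\forall s\in U\,\forall a\in A(\mathfrak{A},s[a/x]\models_{\mathrm{FO}}\psi)$ iff $\forall s\in U(\mathfrak{A},s\models_{\mathrm{FO}}\forall x\,\psi)$. For $\varphi=\exists x\,\psi$ and, more generally, $\varphi=Qx\,\psi$ with $Q$ monotone of type $(1)$, the right-to-left direction is the delicate one: given $\mathfrak{A},s\models_{\mathrm{FO}}Qx\,\psi$ for every $s\in U$, I must exhibit a single function $f:U\to Q^A$ with $\mathfrak{A},U[f/x]\models\psi$. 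The natural choice is $f(s) := \{a\in A\mid \mathfrak{A},s[a/x]\models_{\mathrm{FO}}\psi\}$, which lies in $Q^A$ precisely by the hypothesis; then $U[f/x]=\bigcup_{s\in U}s[f(s)/x]$ consists exactly of those $s[a/x]$ with $\mathfrak{A},s[a/x]\models_{\mathrm{FO}}\psi$, so the induction hypothesis gives $\mathfrak{A},U[f/x]\models\psi$. (For $\exists$ one needs $f(s)\neq\emptyset$, which is guaranteed because $s\models_{\mathrm{FO}}\exists x\,\psi$.) The left-to-right direction goes the other way: from $f:U\to Q^A$ with $\mathfrak{A},U[f/x]\models\psi$, the induction hypothesis says $\mathfrak{A},s'\models_{\mathrm{FO}}\psi$ for all $s'\in U[f/x]$; hence for each $s\in U$ the set $\{a\mid \mathfrak{A},s[a/x]\models_{\mathrm{FO}}\psi\}$ \emph{contains} $f(s)\in Q^A$, and here is the only place monotonicity of $Q$ is used: it upgrades ``contains a set in $Q^A$'' to ``is itself in $Q^A$'', giving $\mathfrak{A},s\models_{\mathrm{FO}}Qx\,\psi$. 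The main obstacle, then, is nothing computational but simply getting this monotonicity application exactly right and making sure the bookkeeping on $U[f/x]$ (that its members are precisely the $s[a/x]$ with $s\in U$, $a\in f(s)$) is stated cleanly; everything else is routine.
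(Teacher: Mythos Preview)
Your approach is exactly what the paper intends: the paper states only that the proposition is ``straightforward to prove by induction on the structure of formulae'' and gives no details, so your outline supplies precisely that argument, and the quantifier case (including the use of monotonicity of $Q$ in the left-to-right direction) is handled correctly.

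There is one slip, however. In the disjunction case your right-to-left direction fails as written: from $\forall s\in U(\mathfrak{A},s\models_{\mathrm{FO}}\psi\vee\theta)$ you propose the split $U_0=U_1=U$, but applying the induction hypothesis to $U_0=U$ would require $\mathfrak{A},s\models_{\mathrm{FO}}\psi$ for \emph{every} $s\in U$, which is not given. The correct split is $U_0=\{s\in U\mid \mathfrak{A},s\models_{\mathrm{FO}}\psi\}$ and $U_1=\{s\in U\mid \mathfrak{A},s\models_{\mathrm{FO}}\theta\}$ (or $U_1=U\setminus U_0$); then $U_0\cup U_1=U$ since each $s$ satisfies at least one disjunct, and the induction hypothesis on each part yields $\mathfrak{A},U_0\models\psi$ and $\mathfrak{A},U_1\models\theta$. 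Your parenthetical about closure under subteams and the empty team suggests you had this in mind, but the split you actually wrote does not invoke it. Everything else is fine.
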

Let $n\in\mathbb{Z}_+$.
Let $Q$ be a generalized quantifier of type $(i_1,...,i_n)$.
Extend the syntax of first-order logic
with atomic expressions 
$A_{Q}(\overline{x}_1,...,\overline{x}_n),$
where each $\overline{x}_j$ is a tuple of variables of length $i_j$.
(\emph{Negated} generalized \emph{atoms are not allowed,
and we only consider logics in negation normal form.})
Let $U$ be a team whose domain contains all variables
that occur in the tuples $\overline{x}_1,...,\overline{x}_n$.
We extend the lax team semantics defined above such that
$\mathfrak{A},U\models A_{Q}(\overline{x}_1,...,
\overline{x}_n)$ iff
$\bigl(\mathit{Rel}(U,\overline{x}_1),...,
\mathit{Rel}(U,\overline{x}_n)\bigr)\in Q^A.$
The generalized quantifier $Q$ defines a
\emph{generalized atom} $A_{Q}$
of the type
$(i_1,...,i_n)$.
This definition is from \cite{kuusiquant}.
Let $k\in\mathbb{Z}_+$. Let $D_k$ denote the generalized quantifier
that contains exactly all structures $(A,R)$ such that $A\not=\emptyset$
and $R\subseteq A^k$ satisfies the condition that
if $(a_1,...,a_{k-1},b)\in R$ and $(a_1,...,a_{k-1},c)\in R$,
then we have $b = c$. \emph{Dependence logic} ($\mathrm{D}$) is 
the extension of first-order logic in negation normal form with the generalized atoms $A_{D_k}$
for each positive integer $k$. These atoms are
called \emph{dependence atoms}. Below we will write
$=\hspace{-1.15mm}(x_1,...,x_k)$ instead of
$A_{D_k}(x_1,...,x_k)$. We note that dependence logic
is sometimes formulated such that negated atoms $\neg\hspace{-1mm}=\hspace{-1.15mm}(x_1,...,x_k)$
are allowed, but since the semantics then
dictates that $\mathfrak{A},U\models \neg\hspace{-1mm}=\hspace{-1.15mm}(x_1,...,x_k)$
iff $U = \emptyset$, these negated atoms can be replaced by $\exists x(x\not=x)$.
\emph{Inclusion logic} is obtained by extending first-order logic in negation normal form
by atoms $x_1,...,x_k\subseteq y_1,...,y_k$ with the semantics
$\mathfrak{A},U\models\ x_1,...,x_k\subseteq y_1,...,y_k$ iff
$\mathit{Rel}(U,(x_1,...,x_k)) \subseteq \mathit{Rel}(U,(y_1,...,y_k))$.
Here $k$ can be any positive integer.
Similarly, \emph{exclusion logic} extends first-order logic in negation normal form
with atoms $x_1,...,x_k\, |\, y_1,...,y_k$ such that 
$\mathfrak{A},U\models\ x_1,...,x_k\, |\, y_1,...,y_k$ iff
$\mathit{Rel}(U,(x_1,...,x_k))\cap \mathit{Rel}(U,(y_1,..,y_k)) = \emptyset$.
Again $k$ can be any positive integer.
\emph{Independence logic} extends first-order logic in negation normal form
with atoms $x_1,...,x_k\, \bot_{z_1,...,z_m}\, y_1,...,y_n$ such that 
$\mathfrak{A},U\models x_1,...,x_k\, \bot_{z_1,...,z_m}\, y_1,...,y_n$ iff
for all $s,s'\in U$ there exists a $t\in U$ such that
%
%
%
%
$$\big(\bigwedge_{i\leq m} s(z_i)=s'(z_i)\big) \Rightarrow
\big(\bigwedge_{i\leq k} t(x_i)=s(x_i) \wedge \bigwedge_{i\leq m}
t(z_i)=s(z_i) \wedge \bigwedge_{i\leq n} t(y_i)=s'(y_i) \big).$$
Here $k$, $m$, $n$ can be any positive integers.
Independence logic also contains atoms $x_1,...,x_k\, \bot\, y_1,...,y_n$
such that $\mathfrak{A},U\models x_1,...,x_k\, \bot\, y_1,...,y_n$ iff
for all $s,s'\in U$ there exists a $t\in U$ such that
$\bigwedge_{i\leq k} t(x_i)=s(x_i) 
\text{ and }\bigwedge_{i\leq n} t(y_i)=s'(y_i).$
Here $k$ and $n$ can be any positive integers.
%

%
%

%
Let $\mathfrak{A}$ be a model and $\tau$ its vocabulary.
Let $S\not=\emptyset$ be a set such that $S\cap A = \emptyset$.
We let $\mathfrak{A} + S$ denote the model $\mathfrak{B}$
such that $B = A \cup S$ and $R^{\mathfrak{B}} = R^{\mathfrak{A}}$
for all $R\in\tau$. The model $\mathfrak{B}$ is called a \emph{finite bloating}
of $\mathfrak{A}$.
We then define the logic $\mathrm{D}^*$ that captures recursive enumerability.
In the spirit of team semantics, $\mathrm{D}^*$ is based on 
the use of sets of assignments, i.e., teams, that involve first-order variables.
Let $\mathrm{D}^+$ denote the logic obtained by extending 
first-order logic in negation normal form by all dependence atoms,
independence atoms, inclusion atoms, and exclusion atoms.
$\mathrm{D}^*$ is obtained by extending $\mathrm{D}^+$
by an additional formula formation rule stating that if $\varphi$ is a formula,
then so is $\mathrm{\mathrm{Ix}}\, \varphi$.
We define $\mathfrak{A},U\models \mathrm{Ix}\, \varphi$ iff
there exists a finite bloating $\mathfrak{A}+S$ of $\mathfrak{A}$
such that $\mathfrak{A}+S,U[S/x]\models \varphi$.
We note that since $=\hspace{-1mm}(x_1,...,x_k,y)$ is equivalent to
$y\bot_{x_1,...,x_k}\,y$, dependence
atoms can in fact be eliminated from $\mathrm{D}^*$.
Note that if desired, we can avoid reference to a proper class of possible 
bloatings of $\mathfrak{A}$ by 
letting $A_1 := A\cup\{A\}$ to be the canonical bloating of $A$ by one element
and $A_{k+1} := A_k\cup\{A_k\}$ the bloating of $A$ by $k+1$ elements. 
\section{$\mathbf{D^*}$ Captures RE}\label{sec:satfo2}
%
%
%

%
Let $\tau$ be a vocabulary. 
Sentences of \emph{existential second-order logic} ($\mathrm{ESO}$) over $\tau$ are
formulae of the type $\exists X_1...\exists X_k\, \varphi$, where $X_1,...,X_k$ are relation variables
and $\varphi$ a sentence of $\mathrm{FO}$ over $\tau\cup\{X_1,...,X_k\}$.
The symbols $X_1,...,X_k$ are not in $\tau$.
We extend $\mathrm{ESO}$
by defining a logic $\mathcal{L}_{RE}$, whose
$\tau$-sentences are of the type $\mathrm{I}Y\, \psi$, where $Y\not\in\tau$ is a
\emph{unary} relation variable and $\psi$ an $\mathrm{ESO}$-sentence
over $\tau\cup\{Y\}$.
%
%
Let $\mathfrak{A}$ be a $\tau$-model.
The semantics of $\mathcal{L}_{RE}$
is defined such that $\mathfrak{A}\models\mathrm{I}Y\, \psi$
iff there exists a finite set $S\not=\emptyset$ such that 
the following conditions hold.
\begin{enumerate}
\item
$A\cap S = \emptyset$.
\item
%
Let $\mathfrak{A}^+$ be the model of the vocabulary $\tau\cup\{Y\}$
with domain $A\cup S$ such that $Y^{\mathfrak{A}^+} = S$ 
and $R^{\mathfrak{A}^+} = R^{\mathfrak{A}}$ for all $R\in\tau$.
We have $\mathfrak{A}^+\models\psi$.
%
%
\end{enumerate}
%
%
%
%
%

%
As we shall see, the logic $\mathcal{L}_{RE}$ can define in the finite exactly all recursively
enumerable classes of finite models.
Let $\sigma\not=\emptyset$ be a finite set of unary relation symbols and $\mathit{Succ}$ a
binary relation symbol.
A \emph{word model} over 
the vocabulary $\{\mathit{Succ}\}\cup\sigma$
is a model $\mathfrak{A}$ defined as follows.
\begin{enumerate}
\item
The domain $A$ of $\mathfrak{A}$ is a
nonempty finite set.
The predicate $\mathit{Succ}$ is a successor relation over $A$, i.e., a
binary relation corresponding to a linear order, but with maximum out-degree
and in-degree equal to one.
\item
Let $b\in A$ be the smallest element with respect to $\mathit{Succ}$.
We have $b\not\in P^{\mathfrak{A}}$ for all $P\in\sigma$. (This is because 
we do not allow models with the empty domain; the empty word
corresponds to the word model with exactly one element.)
For all $a\in A\setminus\{b\}$, there is exactly one $P\in\sigma$
such that $a\in P^{\mathfrak{A}}$.
\end{enumerate}
Word models canonically encode finite words. For example 
the word $abbaa$ over the alphabet $\{a,b\}$ is encoded by the word model $\mathfrak{M}$
over the vocabulary $\{\mathit{Succ}, P_a,P_b\}$ defined
such that
$M = \{0,...,5\}$ and 
$\mathit{Succ}^{\mathfrak{M}}$ is the canonical successor relation
on $M$,
and we have 
$P_a^{\mathfrak{M}} = \{1,4,5\}$
and 
$P_b^{\mathfrak{M}} = \{2,3\}$.
%
%
%

%
%

%
When investigating computations on structure classes (rather than strings),
Turing machines of course operate on 
\emph{encodings} of structures.
We will use the encoding scheme of \cite{libkin}.
Let $\tau$ be a finite vocabulary and $\mathfrak{A}$ a
finite $\tau$-structure. In order to encode the structure $\mathfrak{A}$ by a binary string,
we first need to define a linear ordering of the domain $A$ of $\mathfrak{A}$.
Let $<^{\mathfrak{A}}$ denote such an ordering.
Let $R\in\tau$ be a $k$-ary relation symbol.
The encoding $\mathit{enc}(R^{\mathfrak{A}})$
of $R^{\mathfrak{A}}$ is the $|A|^k$-bit string defined as follows.
Consider an enumeration of all $k$-tuples over $A$ in
the \emph{lexicographic order} defined with respect to $<^{\mathfrak{A}}$.
In the lexicographic order, $(a_1,...,a_k)$ is smaller than $(a_1',...,a_k')$
iff there exists $i\in\{1,...,k\}$ such that $a_i < a_i'$ 
and $a_j = a_j'$ for all $j < i$.
There are $| A |^k$ tuples in $A^k$, and the string
$\mathit{enc}(R^{\mathfrak{A}})$
is the word
$t \in\{0,1\}^*$
of the length $| A |^k$
such that the bit $t_i$ of $t = t_1\, ...\, t_{| A |^k}$ is $1$
if and only if the $i$-th tuple $(a_1,...,a_k)\in A^{k}$ in the lexicographic order
is in the relation $R^{\mathfrak{A}}$.
The encoding $\mathit{enc}(\mathfrak{A})$ is defined as follows.
We first order the relations in $\tau$. Let $p$ be the number of
relations in $\tau$, and let $R_1,...,R_p$ enumerate the
symbols in $\tau$ according to the order.
We define
$\mathit{enc}(\mathfrak{A})\ :=\ 0^{|A|}\cdot 1\cdot \mathit{enc}(R_1^{\mathfrak{A}})\cdot...\cdot
\mathit{enc}(R_p^{\mathfrak{A}}).$
Notice that the encoding of $\mathfrak{A}$ indeed depends on
the order $<^{\mathfrak{A}}$ and the ordering of the relation symbols in $\tau$,
so $\mathfrak{A}$ in general has several encodings. However, we assume that
$\tau$ is always ordered in some canonical way, so the multiplicity of
encodings results in only due to different orderings of the domain of $\mathfrak{A}$.
Let $\tau$ be a finite vocabulary. A Turing machine $\mathrm{TM}$
defines a \emph{semi-decision algorithm for a class $\mathcal{C}$ of finite $\tau$-models}
iff there is an accepting run for $\mathrm{TM}$ on an input $w\in\{0,1\}^*$ exactly when
$w$ is some encoding of some structure $\mathfrak{A}\in\mathcal{C}$.
\begin{proposition}
In the finite, $\mathcal{L}_{RE}$ can define exactly all recursively enumerable classes of models.
\end{proposition}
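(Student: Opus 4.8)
The plan is to prove the two inclusions separately, establishing that every $\mathcal{L}_{RE}$-definable class of finite models is recursively enumerable, and conversely that every r.e.\ class is $\mathcal{L}_{RE}$-definable.

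For the easy direction, suppose $\mathcal{C}$ is defined by $\mathrm{I}Y\,\psi$ with $\psi$ an $\mathrm{ESO}$-sentence. I would describe a semi-decision procedure: given an encoding $w$ of a finite $\tau$-structure $\mathfrak{A}$, enumerate the finite sets $S$ of fresh elements (it suffices to use the canonical bloatings $A_1, A_2, \dots$ mentioned in the preliminaries, so $S$ ranges over a countable recursively presented family), and for each such $S$ form $\mathfrak{A}^+$ and then, since $\mathrm{ESO}$-truth over a fixed finite structure is decidable (one quantifies over finitely many relations of bounded arity and then checks a first-order sentence), decide whether $\mathfrak{A}^+ \models \psi$. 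If some $S$ works, accept. Since membership $\mathfrak{A} \in \mathcal{C}$ is equivalent to the existence of such an $S$, this is a correct semi-decision algorithm, so $\mathcal{C}$ is r.e. Care must be taken that the input $w$ might be any of several encodings of $\mathfrak{A}$; but the definition of semi-decision algorithm already only requires acceptance on \emph{some} encoding, and the procedure above is encoding-invariant in the relevant sense.

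For the hard direction — which I expect to be the main obstacle — let $\mathcal{C}$ be r.e.\ via a Turing machine $\mathrm{TM}$ that accepts exactly the encodings of members of $\mathcal{C}$. I want a sentence $\mathrm{I}Y\,\psi$ defining $\mathcal{C}$. The idea is that the $\mathrm{I}Y$ operator supplies, on top of the finite structure $\mathfrak{A}$, a fresh set $S$ whose size is unbounded; the set $A \cup S$ can then be used as the ``scratch space'' (tape cells $\times$ time steps) for simulating a finite accepting computation of $\mathrm{TM}$ on an encoding of $\mathfrak{A}$. Inside the $\mathrm{ESO}$-sentence $\psi$ over $\tau \cup \{Y\}$, I would (i) existentially guess a linear order on the original domain $A$, and hence (via the standard $\mathrm{ESO}$-definable machinery) an encoding $w = \mathit{enc}(\mathfrak{A})$ of $\mathfrak{A}$ relative to that order, represented by guessed relations; (ii) existentially guess a linear order on all of $A \cup S$, giving enough ``addresses'' to index a computation tableau whose width and height are polynomial in $|A \cup S|$ — and since $S$ can be taken arbitrarily large, \emph{any} finite running time is eventually accommodated; (iii) write the usual first-order/$\mathrm{ESO}$ description of a valid accepting $\mathrm{TM}$-computation on input $w$ (guessed relations encoding head position, state, and tape contents at each time step, with local consistency conditions and an acceptance condition). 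The resulting formula is an $\mathrm{ESO}$-sentence over $\tau \cup \{Y\}$, hence $\mathrm{I}Y\,\psi$ is a legitimate $\mathcal{L}_{RE}$-sentence, and $\mathfrak{A} \models \mathrm{I}Y\,\psi$ iff for some finite $S$ there is an accepting computation fitting in space bounded by $|A\cup S|$, i.e.\ iff $\mathrm{TM}$ accepts some encoding of $\mathfrak{A}$, i.e.\ iff $\mathfrak{A} \in \mathcal{C}$.

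The subtle points I would be careful about are: first, that the bloating set $S$ carries no structure except being named by $Y$, so all the ordering and tableau relations must be guessed by the $\mathrm{ESO}$ prefix and constrained by $\psi$; second, that $\mathrm{ESO}$ over the \emph{ordered} structure $(A\cup S, <, \dots)$ is exactly the right tool, since Fagin's theorem / the standard ``$\mathrm{ESO} = \mathrm{NP}$ on ordered structures, and a fixed finite computation is trivially in $\mathrm{NP}$'' argument gives the computation-describing sentence essentially off the shelf; and third, the matching of the guessed $w$ to the actual isomorphism type of $\mathfrak{A}$ — this is handled because the encoding relations are defined from the guessed order on $A$ by a fixed first-order formula, exactly as in the Immerman--Vardi/Fagin encoding lemmas, so no information about $\mathfrak{A}$ is lost or added. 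Once these are in place the equivalence is immediate, and combined with the easy direction this proves the proposition.
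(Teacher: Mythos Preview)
Your proposal is correct and follows essentially the same route as the paper's sketch: for the hard direction you use the fresh $Y$-points as scratch space for a computation tableau, guess an order and the tableau relations via the $\mathrm{ESO}$ prefix, and verify the accepting run in first-order; for the easy direction you dovetail over candidate sizes of $S$ and decide the $\mathrm{ESO}$ part on the resulting finite structure. One small slip: the paper's definition of a semi-decision algorithm requires acceptance on \emph{every} encoding of a member of $\mathcal{C}$ (not merely some encoding), but since your procedure is, as you note, encoding-invariant, this does not affect the argument.
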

\begin{proof}[Sketch]
Let $\mathrm{TM}$ be a Turing machine that defines a semi-decision algorithm for some class of models.
It is routine to write a formula $\varphi_{\mathrm{TM}}\, :=\, \mathrm{I}Y \overline{\exists X}\, \psi$
such that $\mathfrak{A}\models \varphi_{\mathrm{TM}}$ iff
there exists an extension $\mathfrak{B}$ of $\mathfrak{A}$ that consist essentially of a copy of $\mathfrak{A}$
and another part $\mathfrak{C}$ that encodes the computation table of
an accepting computation of $\mathrm{TM}$ on an input $\mathit{enc}({\mathfrak{A}})$.
We can use the predicates in $\overline{\exists X}$ in order to define word models that
encode $\mathit{enc}({\mathfrak{A}})$ and other strings that correspond to the Turing machine
tape at different stages of the computation.
Symbols in $\overline{\exists X}$ can also be used, inter alia, in order to define the
other parts of the computation table and an ordering of the
domain of $\mathfrak{A}$, and also relations that connect $\mathfrak{A}$ to
$\mathfrak{C}$ in order to ensure $\mathfrak{A}$ and $\mathfrak{C}$ are correctly related.
The symbol $Y$ is used in order to see  which points belong to the original model $\mathfrak{A}$.
For the converse, given a sentence $\mathrm{I}Y \overline{\exists X}\, \psi$ 
of $\mathcal{L}_{RE}$, we can define a Turing machine that 
first non-deterministically provides a number $k\in\mathbb{Z}_+$ of fresh points to be added to
the domain of the model considered, and then checks if $\overline{\exists X}\, \psi$
holds in the obtained larger model. 
\end{proof}
If desired, obviously $\mathcal{L}_{RE}$ can be modified without
change in expressivity such that
the set of fresh points labelled by $Y$ can also be possibly empty.
We define $\mathcal{L}_{RE}$ with $Y$ always nonempty simply
because of technical issues related to the treatment of
disjunction in team semantics (see below).
It is also worth noting here that $\mathcal{L}_{RE}$ is a
\emph{rather straightforward characterization of $\mathrm{RE}$}
in terms of a logic that is almost classical. Indeed, $\mathcal{L}_{RE}$
is rather similar to $\mathrm{ESO}$.
Our next aim is to prove 
Lemma \ref{seven}, which essentially provides a
way of encoding a unary relation symbol $Y$
by a corresponding variable symbol $y$ with the help of
inclusion, exclusion, and independence atoms. 
For the purposes of the Lemma, we
first define a translation from dependence
logic to $\mathrm{D}^+$.
Let $\chi$ be a \emph{sentence} of dependence logic over a
vocabulary $\tau$ such that $Y\not\in\tau$. Let $y,v,u,u'$ be variables that
\emph{do not occur} in $\chi$. We next define a translation $T_Y^y(\chi)$
of $\chi$ into $\mathrm{D}^+$ by recursion on the structure of $\chi$. (Strictly speaking,
the variables $v,u,u'$ are fixed parameters of the translation just like $y$ and $Y$,
so we should write $T_Y^{y,v,u,u'}(\chi)$
instead of $T_Y^{y}(\chi)$.
%
%
%
The issue here is only
that when a sentence $\chi$ is translated, the auxiliary
variables $y,v,u,u'$ should not occur in $\chi$.)
\begin{enumerate}
\item
$T_Y^{\empty\hspace{0.3mm}y}(R(x_1,...,x_k))\, :=\, R(x_1,...,x_k)$
and
$T_Y^{\empty\hspace{0.3mm}y}(\neg R(x_1,...,x_k))\, :=\, \neg R(x_1,...,x_k)$
\item
$T_Y^{\empty\hspace{0.3mm}y}(x = z)\, :=\, x = z$
and
$T_Y^{\empty\hspace{0.3mm}y}(\neg x = z)\, :=\, \neg x = z$
\item
$T_Y^y(Y(x))\, :=\, x\subseteq y$
and 
$T_Y^y(\neg Y(x))\, :=\, x | y$
\item
$T_Y^y(\, =\hspace{-1mm}(x_1,...,x_k)\, )\, :=\ \ =\hspace{-1mm}(x_1,...,x_k)$
\item
$T_Y^{\empty\hspace{0.3mm}y}(\, (\varphi\wedge\psi)\, )\, :=\, (\, (T_Y^{\empty\hspace{0.3mm}y}(\varphi)\wedge
T_Y^{\empty\hspace{0.3mm}y}(\psi)\, )$
\item\label{joo}
$T_Y^{\empty\hspace{0.3mm}y}(\, (\varphi\vee\psi)\, )\, :=\,
\exists v\bigl(\, v\bot_{\overline{z}}\, y\, \wedge\, \bigl((T_Y^{\empty\hspace{0.3mm}y}(\varphi) \wedge v = u)\vee
(T_Y^{\empty\hspace{0.3mm}y}(\psi) \wedge v = u')\bigr)\, \bigr)$,
where $\overline{z}$ contains exactly all variables quantified superordinate to $(\varphi\vee\psi)$ in $\chi$,
i.e., exactly each $x$ such that $(\varphi\vee\psi)$ is in the scope of $\exists x$ or $\forall x$.
\item\label{joojoo}
Assume $\exists x\, \varphi$ is subordinate to a disjunction in $\chi$, meaning that there is a
subformula $(\alpha \vee\beta)$ of $\chi$ and $\exists x\, \varphi$ is a 
subformula of $(\alpha\vee\beta)$.
We define $T_Y^{\empty\hspace{0.3mm}y}(\exists x\, \varphi)\, :=\, \exists x\,
\bigl(\, x\bot_{\overline{z}}\, yv\, \wedge\, T_Y^{\empty\hspace{0.3mm}y}(\varphi)\, \bigr),$
where $\overline{z}$ contains exactly all variables quantified superordinate to $\exists x\, \varphi$ in $\chi$,
with the exception that $\overline{z}$ never contains $x$; the exception is relevant if $\chi$
contains nested quantification of $x$.
\item\label{joojoojoo}
Assume $\exists x\, \varphi$ is not subordinate to a disjunction in $\chi$.
Then $T_Y^{\empty\hspace{0.3mm}y}(\exists x\, \varphi)\, :=\, \exists x\,
\bigl(\, x\bot_{\overline{z}}\, y\, \wedge\, T_Y^{\empty\hspace{0.3mm}y}(\varphi)\, \bigr),$
where $\overline{z}$ contains exactly all variables quantified superordinate to $\exists x\, \varphi$ in $\chi$,
with the exception that $\overline{z}$ never contains $x$.
\item
$T_Y^{\empty\hspace{0.3mm}y}(\forall x\, \varphi)\, :=\, \forall x\, (T_Y^{\empty\hspace{0.3mm}y}(\varphi))$
\end{enumerate}
%

%
%
%
%
%

%
%
%
Let $\mathfrak{A}$ be a model
such that $|A|\geq 2$. Let $S\subseteq A$.
Let $\chi$ be a sentence of dependence logic and $\varphi$ a
subformula of $\chi$.
Let $(U,V)$ a pair of be teams with codomain $A$ such
that the following conditions hold.
\begin{enumerate}
\item
Call $Z := \mathit{Dom}(U)$.
$Z$ contains exactly all variables quantified superordinate to $\varphi$
in $\chi$. $\mathit{Dom}(V)$ is $Z\cup\{y,u,u'\}$ or $Z\cup\{y,v,u,u'\}$;
we have $v\in \mathit{Dom}(V)$ iff $\varphi$ is subordinate to a disjunction in $\chi$.
\item
We have $U = V\upharpoonright Z$,
i.e., $U = \{\, {s\upharpoonright Z}\ |\ s\in V\, \}$, where $Z = \mathit{Dom}(U)$.
%
%
\item
There exists a team $X$ such
that $V = X[S/y]$. (Thus $S\not=\emptyset$
if $V \not= \emptyset$.)
\item
For all $s,t\in V$, we have $s(u) = t(u)\not= t(u') = s(u')$.
In other words, every assignment in $V$ gives exactly the same interpretation to $u$ and to $u'$,
and the interpretation of $u$ is different from that of $u'$.
\end{enumerate}
When $(U,V)$ satisfies the above four conditions, we say
that $(U,V)$ is a \emph{suitable pair} for $\mathfrak{A}$,
$S\subseteq A$, $(y,v,u,u')$ and $(\varphi,\chi)$.
Below $\mathfrak{A}$, $S$, and $(y,v,u,u')$ will always 
be clear from the context (and in fact the same everywhere),
so we may simply talk about suitable pairs for $(\varphi,\chi)$.
Let $\mathfrak{B}$ be a model and $T\subseteq B$ a set.
Let $\tau$ be the vocabulary of $\mathfrak{B}$.
Let $P\not\in\tau$ be a unary relation symbol.
We let $(\mathfrak{B},P\mapsto T)$
denote the expansion of $\mathfrak{B}$
to the vocabulary $\tau\cup\{P\}$ such that $P^{\mathfrak{B}} = T$.
Let $s$ be an assignment with domain $X$.
Let $\{x_1,...,x_k\}$ be a finite
set of variables. We let $s_{-\{x_1,...,x_k\}}$
denote the assignment $s\upharpoonright (X\setminus\{x_1,...,x_k\})$.
\begin{lemma}\label{seven}
Let $\chi$ be a sentence of dependence logic not containing the symbols $y,v,u,u'$.
Let $\mathfrak{A}$ be a model
with at least two elements. Let $Y$ be a unary relation symbol that occurs
neither in $\chi$ nor in the vocabulary of\, $\mathfrak{A}$.
Let $S\subseteq A$. Let $(\{\emptyset\},V)$ be a suitable pair of
for $\mathfrak{A}$, $S$, $(y,v,u,u')$ and $(\chi,\chi)$.
Then we have $\bigl(\mathfrak{A}, Y\mapsto S\bigr),\{\emptyset\} \models \chi$
iff\, $\mathfrak{A},V\models T_Y^y(\chi)$.
\end{lemma}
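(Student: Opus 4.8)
The plan is to proceed by induction on the structure of the subformula $\varphi$ of $\chi$, proving a suitably strengthened statement that relates, for every subformula $\varphi$ and every suitable pair $(U,V)$ for $(\varphi,\chi)$, the satisfaction of $\varphi$ by the expanded model $(\mathfrak{A},Y\mapsto S)$ under the team $U$ to the satisfaction of $T_Y^y(\varphi)$ by $\mathfrak{A}$ under $V$. That is, the induction hypothesis should read: for every subformula $\varphi$ of $\chi$ and every suitable pair $(U,V)$ for $\mathfrak{A}$, $S$, $(y,v,u,u')$ and $(\varphi,\chi)$, we have $(\mathfrak{A},Y\mapsto S),U\models\varphi$ iff $\mathfrak{A},V\models T_Y^y(\varphi)$. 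The Lemma is then the special case $\varphi=\chi$, $U=\{\emptyset\}$. The role of conditions (1)--(4) in the definition of a suitable pair is precisely to maintain, through the recursion, that $V$ ``knows'' $S$ via the fixed coordinate $y$ (condition 3), carries two distinct constant markers $u,u'$ (condition 4) used to reassemble a split disjunction, and is a conservative bloating of $U$ over the genuinely quantified variables $Z$ (conditions 1--2).

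First I would handle the atomic cases. For $Y(x)$ and $\neg Y(x)$: by condition (3), $\mathit{Rel}(V,y)=S$, so $\mathfrak{A},V\models x\subseteq y$ iff $\mathit{Rel}(V,x)\subseteq S$ iff every $s\in V$ has $s(x)\in S$ iff $(\mathfrak{A},Y\mapsto S),U\models Y(x)$ — here one uses condition (2) to pass between $U$ and $V\upharpoonright Z$ and note $x\in Z$. The exclusion atom $x\mid y$ handles $\neg Y(x)$ dually. The remaining atoms ($R$, $\neg R$, $x=z$, $\neg x=z$, and dependence atoms $=\!(x_1,\dots,x_k)$) are translated identically and only mention variables in $Z$, so the equivalence is immediate from condition (2) together with Proposition~\ref{first-ordercorrespondence} (for the first-order atoms) and the definition of the dependence-atom semantics via $\mathit{Rel}$. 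Conjunction is routine: a suitable pair for $(\varphi\wedge\psi,\chi)$ is simultaneously suitable for $(\varphi,\chi)$ and for $(\psi,\chi)$, so the claim follows from the induction hypothesis applied to both conjuncts. The universal quantifier case is equally routine: $\forall x$ adds $x$ to $Z$ on both sides, $A/x$-duplication commutes with $\upharpoonright Z$ and with the $S/y$ expansion, so $(U[A/x],V[A/x])$ is again suitable for $(\varphi,\chi)$, and we invoke the hypothesis.

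The genuinely delicate cases are the existential quantifier (clauses \ref{joojoo}--\ref{joojoojoo}) and, above all, disjunction (clause \ref{joo}). For existentials, the added conjunct $x\bot_{\overline z}\,y$ (or $x\bot_{\overline z}\,yv$) is exactly what forces the witness function for $x$ to depend only on the ``real'' superordinate variables $\overline z$ and not on the bookkeeping variables $y$ (and $v$) — this is what guarantees that a witnessing $f:U\to\mathcal P(A)\setminus\emptyset$ on the $U$-side lifts to a legitimate witness on the $V$-side and conversely; one checks that the resulting pair $(U[f/x],V[f'/x])$ is suitable for $(\varphi,\chi)$, using the stated exception that $\overline z$ omits $x$ to handle re-quantification. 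For disjunction, the main obstacle is that lax team semantics splits $U=U_0\cup U_1$ with possible \emph{overlap}, whereas on the $V$-side the translation must recover a genuine split; the device $\exists v(v\bot_{\overline z}y\wedge((\dots\wedge v=u)\vee(\dots\wedge v=u')))$ solves this by using the fresh $v$ to ``color'' each assignment with $u$ or $u'$, and $v\bot_{\overline z}y$ ensures the coloring is a function of $\overline z$ alone so that the two colored sub-teams, after restricting away $v$, are exactly suitable pairs for $(\varphi,\chi)$ and $(\psi,\chi)$ respectively — note a new $v$ enters the domain here, matching the ``$v\in\mathit{Dom}(V)$ iff subordinate to a disjunction'' clause in the definition of suitable pair, and this is also why $S$ is required nonempty (so the disjunct teams are over nonempty $V$). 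I expect verifying that the colored sub-teams satisfy all four suitability conditions — especially reconciling the overlap in $U_0\cap U_1$ with the disjointness forced by $v=u$ versus $v=u'$ — to be the crux of the argument; once that bookkeeping is in place, both directions of the disjunction case follow by applying the induction hypothesis to $\varphi$ and $\psi$.
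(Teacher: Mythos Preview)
Your proposal is correct and takes essentially the same approach as the paper's own proof: an induction over subformulae of $\chi$ with the strengthened hypothesis ranging over all suitable pairs, treating the atomic, $\wedge$, and $\forall$ cases as routine and isolating disjunction and $\exists$ as the substantive steps. The paper carries out precisely the disjunction argument you sketch (defining the witness for $v$ by the three-way case split on $U_0\setminus U_1$, $U_1\setminus U_0$, $U_0\cap U_1$, and in the converse using $v\bot_{\overline z}\,y$ together with the constancy of $u,u'$ to recover the $[S/y]$-closure of the split sub-teams) and the existential argument via lifting the witness function along the projection $V\to U$.
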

\begin{proof}
We prove by induction 
on the  structure of $\chi$ that for any subformula $\varphi$ of $\chi$,
the equivalence $\bigl(\mathfrak{A}, Y\mapsto S\bigr),U \models \varphi\, \Leftrightarrow\,
\mathfrak{A},V\models T_Y^y(\varphi)$ holds  
for all suitable pairs $(U,V)$ for $\mathfrak{A}$, $S$, $(y,v,u,u')$ and $(\varphi,\chi)$.
The cases for literals $R(x_1,...,x_k)$, $\neg R(x_1,...,x_k)$, $x=z$ and $\neg x=z$
are clear,
%
%
%
as is the case for $=\hspace{-1mm}(x_1,...,x_k)$.
The cases for literals $Y(x)$ and $\neg Y(x)$ follow directly by
the definition of the atoms $x\subseteq y$ and $x|y$.
The cases for $(\varphi\wedge\psi)$ and $\forall x\, \varphi$
follow easily by the semantics of $\wedge$ and $\forall$ and
the induction hypothesis.
%
%
%

%
Let $(U,V)$ be a suitable pair for $\mathfrak{A}$, $S$, $(y,v,u,u')$,
and $(\varphi\vee\psi,\chi)$.
Assume that $(\mathfrak{A},Y\mapsto S),U\models \varphi\vee\psi$.
Thus $(\mathfrak{A},Y\mapsto S),U_0\models \varphi$
and $(\mathfrak{A},Y\mapsto S),U_1\models \psi$ for some
$U_0$ and $U_1$ such that $U_0\cup U_1 = U$.
We must show that
\begin{equation}\label{something}
\mathfrak{A},V\ \models\
\exists v \bigl(\, v\bot_{\overline{z}}\, y\, \wedge\,
\bigl((T_Y^{\empty\hspace{0.3mm}y}(\varphi) \wedge v = u)\vee
(T_Y^{\empty\hspace{0.3mm}y}(\psi) \wedge v=u'\, )\bigr)\, \bigr).
\end{equation}
Define a function $g:V\rightarrow (\mathcal{P}(A)\setminus\{\emptyset\})$
so that the following conditions hold.
\begin{enumerate}
\item
If $s_{-\{u,u',v,y\}}\in U_0\setminus U_1$, then $g(s) = \{s(u)\}$.
\item
If $s_{-\{u,u',v,y\}}\in U_1\setminus U_0$, then $g(s) = \{s(u')\}$.
\item
If $s_{-\{u,u',v,y\}}\in U_0\cap U_1$, then $g(s) = \{s(u),s(u')\}$.
\end{enumerate}
Call $Z := V[g/v]$. We must show that
\begin{equation}\label{foru}
\mathfrak{A},Z\models v\bot_{\overline{z}}\, y\, \wedge\,
\bigl((T_Y^{\empty\hspace{0.3mm}y}(\varphi) \wedge v = u)\vee
(T_Y^{\empty\hspace{0.3mm}y}(\psi)
\wedge v=u'\, \bigr)\, \bigr).
\end{equation}
By the definition of $g$, it is clear that $\mathfrak{A},Z\models v\bot_{\overline{z}}\, y$.
To deal with the rest of Equation \ref{foru},
we need to find teams $Z_0$ and $Z_1$ such that
$\mathfrak{A},Z_0\models T_Y^y(\varphi)\wedge v = u$
and $\mathfrak{A},Z_1\models T_Y^y(\psi)\wedge v = u'$,
and furthermore, $Z_0\cup Z_1 = Z$.
Define $Z_0 := \{\, s\in Z\, |\, s(v) = s(u)\, \}$ and
$Z_1 := \{\, s\in Z\, |\, s(v) = s(u')\, \}$.
To see that $(U_0,Z_0)$ and $(U_1,Z_1)$ are
suitable pairs for $(\varphi,\chi)$ and $(\psi,\chi)$, respectively,
we consider the pair $(U_0,Z_0)$. (The argument for $(U_1,Z_1)$
is similar.)
It is clear that the domain of $Z_0$ is
$\mathit{Dom}(U_0)\cup\{y,v,u,u'\}$.
%
%
The fact that $U_0 = Z_0\upharpoonright \mathit{Dom}(U_0)$
follows essentially by the definition of the function $g$.
Also, $g$ was defined such that for all $s\in V$,
the set $g(s)$ is independent of $s(y)$, and furthermore,
$Z_0$ was defined such that for all $s'\in Z$, whether or not $s'\in Z_0$
holds, is independent of $s'(y)$. Thus there exists a team $X$
such that $Z_0 = X[S/y]$.
Finally, it is clear that $s(u)=t(u)\not= s(u') = t(u')$
holds for all $s,t\in Z_0$.
Thus $(U_0,Z_0)$ is a suitable pair.
A similar argument shows that also $(U_1,Z_1)$
is a suitable pair.
%
%
We know that $\mathfrak{A},U_0\models \varphi$, and
since $(U_0,Z_0)$ is a suitable pair, we may apply the induction hypothesis
in order to conclude that $\mathfrak{A},Z_0\models T_Y^y(\varphi)$.
Similarly, we conclude that $\mathfrak{A},Z_1\models T_Y^y(\psi)$.
Due to the definition of $Z_0$ and $Z_1$, it is clear that $Z = Z_0\cup Z_1$,
and also that $\mathfrak{A},Z_0\models v = u$ and $\mathfrak{A},Z_1\models v = u'$.
Since we already saw that $\mathfrak{A},Z\models v\bot_{\overline{z}}\, y$,
we conclude that Equation \ref{foru} holds.
%
%

%
For the converse, we assume that Equation \ref{something}
holds and show that $(\mathfrak{A},Y\mapsto S),U\models (\varphi\vee\psi)$.
Here $(U,V)$ is a suitable pair for $\mathfrak{A}$, $S$, $(y,v,u,u')$, and $(\varphi\vee\psi,\chi)$.
By Equation 1, there exists a
function $h:V\rightarrow(\mathcal{P}(A)\setminus\emptyset)$
such that we have
%
%
%
%
$\mathfrak{A},V[h/v]\models v\bot_{\overline{z}}\, y\, \wedge\,
\bigl((T_Y^{\empty\hspace{0.3mm}y}(\varphi) \wedge v = u)\vee
(T_Y^{\empty\hspace{0.3mm}y}(\psi)
\wedge v=u'\, \bigr)\, \bigr).$
%
%
%
%
Call $W := V[h/v]$.
Thus there exist teams $W_0$ and $W_1$ such that
\begin{equation}\label{third}
\mathfrak{A}, W_0\models (T_Y^y(\varphi)\wedge v = u)
\text{ and }
\mathfrak{A}, W_1\models (T_Y^y(\psi) \wedge v = u'),
\end{equation}
and furthermore $W = W_0\cup W_1$.
Define $U_0 = W_0\upharpoonright\mathit{Dom}(U)$ and $U_1 = W_1\upharpoonright\mathit{Dom}(U)$.
Since $\mathfrak{A},W\models v\bot_{\overline{z}}\, y$,
there exists a team $X$ such that $W = X[S/y]$.
%
%
Thus, since $\mathfrak{A}, W_0\models v = u$
and $\mathfrak{A}, W_1\models v = u'$,
%
%
there exist teams $X_0$ and $X_1$
such that $W_0 = X_0[S/y]$ and $W_1 = X_1[S/y]$.
Hence $(U_0,W_0)$ and $(U_1,W_1)$
are suitable pairs for $(\varphi,\chi)$ and $(\psi,\chi)$, respectively.
Thus we may apply the induction hypothesis
in order to conclude by Equation \ref{third} that $(\mathfrak{A},Y\mapsto S),U_0\models\varphi$
and $(\mathfrak{A},Y\mapsto S),U_1\models\psi$.
As clearly $U_0\cup U_1 = U$,
we have $(\mathfrak{A}, Y\mapsto S), U\models (\varphi\vee\psi)$.
We then discuss formulae of the type $\exists x\, \varphi$.
We consider the case where $\exists x\, \varphi$ is
subordinate to a disjunction; the case where this does not
hold is similar. Let $(U,V)$ be a suitable pair for $(\exists x\, \varphi,\chi)$.
Assume that $(\mathfrak{A},Y\mapsto S),U\models \exists x\, \varphi$.
Thus there exists a function $f:U\rightarrow (\mathcal{P}(A)\setminus\emptyset)$
such that $(\mathfrak{A},Y\mapsto S),U[f/x]\models \varphi$.
Define a function $g:V\rightarrow (\mathcal{P}(A)\setminus\emptyset)$
such that $g(s) = f(s_{-\{y,u,u',v\}})$ for all $s\in V$.
Clearly we have
$\mathfrak{A},V[g/x]\models x\bot_{\overline{z}}\, yv,$
and clearly $\bigl(U[f/x],V[g/x]\bigr)$ is a 
suitable pair. Therefore, as $(\mathfrak{A},Y\mapsto S),U[f/x]\models \varphi$,
we conclude that $\mathfrak{A},V[g/x]\models T_Y^y(\varphi)$ by
the induction hypothesis.
Thus $\mathfrak{A},V[g/x]\models x\bot_{\overline{z}}\, yv\wedge\, T_Y^y(\varphi)$,
and therefore $\mathfrak{A},V\models
\exists x\bigl(x\bot_{\overline{z}}\, yv\wedge\, T_Y^y(\varphi)\bigr)$.
For the converse, assume that 
$\mathfrak{A},V\models
\exists x\bigl(x\bot_{\overline{z}}\, yv\, \wedge\, T_Y^y(\varphi)\bigr)$.
Thus there exists a function $h:V\rightarrow (\mathcal{P}(A)\setminus\emptyset)$
such that $\mathfrak{A},V[h/x]\models
x\bot_{\overline{z}}\, yv\, \wedge\, T_Y^y(\varphi)$.
Let $F:U\rightarrow V$ be a function
such that for each $s\in U$,
we have $s = {(F(s))}_{-\{y,u,u',v\}}$.
Define the function $k:U\rightarrow (\mathcal{P}(A)\setminus\emptyset)$
such that $k(s) = h(F(s))$ for all $s\in U$.
As $\mathfrak{A},V[h/x]\models x\bot_{\overline{z}}\, yv$,
we see that $\bigl(U[k/x],V[h/x]\bigr)$ is a
suitable pair. Thus we may use the induction hypothesis to
conclude that, since $\mathfrak{A},V[h/x]\models T_Y^y(\varphi)$,
we have $(\mathfrak{A},Y\mapsto S),U[k/x]\models\varphi$.
Thus $(\mathfrak{A},Y\mapsto S),U\models\exists x\, \varphi$.
\end{proof}
Define $\mathbb{T}_Y^{\empty\hspace{0.3mm}y}(\varphi) :=
\exists u\exists u'\bigl(\, u\not = u'\ \wedge\,
=\hspace{-1.3mm}(u)\, \wedge\, =\hspace{-1.3mm}(u')\,
\wedge\, T_Y^y(\varphi)\, ).$
The following Lemma now follows directly.
\begin{lemma}\label{morespacelemma}
Let $\mathfrak{A}$ be a model such that $|A|\geq 2$.
Let $S\subseteq A$ be a nonempty finite set. 
Let $\varphi$ be a sentence of dependence logic. Let $y$
be a variable that does not occur in $\varphi$. Let $Y$ be a
unary symbol that occurs neither in $\varphi$ nor in the
vocabulary of $\mathfrak{A}$. 
Then $(\mathfrak{A}, Y\mapsto S),\{\emptyset\}\models\varphi$ iff\, $\mathfrak{A},\{\emptyset\}[S/y]\models
\mathbb{T}_Y^{\empty\hspace{0.3mm}y}(\varphi)$. 
\end{lemma}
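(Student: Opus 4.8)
The plan is to derive Lemma~\ref{morespacelemma} as a more or less immediate corollary of Lemma~\ref{seven}, the only work being to set up the auxiliary variables $u,u'$ so that the hypotheses of Lemma~\ref{seven} are met. First I would observe that, since $\varphi$ is a sentence of dependence logic, $\mathbb{T}_Y^{y}(\varphi)$ is evaluated at the team $\{\emptyset\}[S/y]$, i.e.\ the team $\{\,\emptyset[a/y]\mid a\in S\,\}$; call it $V_0$. Its domain is $\{y\}$, which is correct since no variable is quantified superordinate to the sentence $\varphi$ inside $\varphi=\chi$. The plan is to unfold the three leading existential quantifiers and the two dependence atoms of $\mathbb{T}_Y^{y}(\varphi)=\exists u\exists u'(\,u\neq u'\wedge=\!(u)\wedge=\!(u')\wedge T_Y^y(\varphi)\,)$ under lax semantics and show that the subteam they produce is exactly (up to the irrelevant $v$-coordinate) a suitable pair partner for $(\chi,\chi)$ in the sense required by Lemma~\ref{seven}.

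Concretely I would argue as follows. Suppose $(\mathfrak{A},Y\mapsto S),\{\emptyset\}\models\varphi$. Since $|A|\geq 2$ we may pick distinct $a_0,a_1\in A$. Choose the existential functions for $u$ and $u'$ to be the constant singletons $\{a_0\}$ and $\{a_1\}$ respectively; this is legal because $\mathcal{P}(A)\setminus\{\emptyset\}$ contains them. The resulting team $V$ has domain $\{y,u,u'\}$, satisfies $u\neq u'$ and the dependence atoms $=\!(u)$, $=\!(u')$ (constant values), and is of the form $X[S/y]$ with $X=\{\,\emptyset[a_0/u][a_1/u']\,\}$. Hence $(\{\emptyset\},V)$ is a suitable pair for $\mathfrak{A}$, $S$, $(y,v,u,u')$ and $(\chi,\chi)$ --- note $v\notin\mathit{Dom}(V)$, which is correct since the sentence $\chi$ is not subordinate to a disjunction in itself. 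Lemma~\ref{seven} then gives $\mathfrak{A},V\models T_Y^y(\varphi)$, and combining this with the three atoms just verified yields $\mathfrak{A},\{\emptyset\}[S/y]\models\mathbb{T}_Y^{y}(\varphi)$ by the semantics of $\wedge$ and $\exists$.

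For the converse, assume $\mathfrak{A},\{\emptyset\}[S/y]\models\mathbb{T}_Y^{y}(\varphi)$. Unwinding the existentials, there are functions $f,f'$ producing a team $V:=(\{\emptyset\}[S/y])[f/u][f'/u']$ with $\mathfrak{A},V\models u\neq u'\wedge=\!(u)\wedge=\!(u')\wedge T_Y^y(\varphi)$. The atoms $=\!(u)$ and $=\!(u')$ force every assignment in $V$ to give the same value to $u$ and the same value to $u'$, and $u\neq u'$ forces these two values to differ; hence the fourth suitability condition holds. The values of $u$ and $u'$ do not depend on $y$ (they are constant on all of $V$), so $V$ is still of the form $X[S/y]$, giving the third condition; the first two conditions (domain and projection) are immediate. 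So $(\{\emptyset\},V)$ is a suitable pair, and Lemma~\ref{seven} applied to $\mathfrak{A},V\models T_Y^y(\varphi)$ yields $(\mathfrak{A},Y\mapsto S),\{\emptyset\}\models\varphi$.

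The only genuine subtlety --- and thus the step I would write out most carefully --- is checking that the team produced by the leading $\exists u\exists u'$ under \emph{lax} semantics really does satisfy all four suitability conditions, in particular that the dependence atoms $=\!(u),=\!(u')$ together with $u\neq u'$ deliver exactly condition~(4), and that independence of the chosen values from $y$ (needed for condition~(3), the ``$V=X[S/y]$'' requirement) is automatic because we took the witnessing functions to be constant. Everything else is bookkeeping about domains and restrictions. I do not expect any real obstacle here; the lemma is essentially packaging, and the phrase ``follows directly'' in the excerpt is accurate.
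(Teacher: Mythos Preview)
Your proposal is correct and is exactly the argument the paper has in mind when it says the lemma ``follows directly'': you unfold the prefix $\exists u\exists u'(\,u\neq u'\wedge=\!(u)\wedge=\!(u')\wedge\cdots)$, observe that the dependence atoms together with $u\neq u'$ enforce condition~(4) of suitability while constancy of $u,u'$ preserves the $X[S/y]$ form needed for condition~(3), and then invoke Lemma~\ref{seven} on the resulting suitable pair $(\{\emptyset\},V)$. The only point worth spelling out slightly more explicitly in the converse direction is that, under lax semantics, the witnessing functions $f,f'$ need not themselves be constant, but the atoms $=\!(u),=\!(u')$ force the \emph{resulting team} $V$ to have constant $u$- and $u'$-columns, and since every assignment in $\{\emptyset\}[S/y]$ extends to at least one assignment in $V$, the $y$-column of $V$ is still exactly $S$; you already note this, so the argument is complete.
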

\begin{theorem}
$\mathcal{L}_{RE}$ is contained in $\mathrm{D}^*$.
\end{theorem}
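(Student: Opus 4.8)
The plan is to show that every $\mathcal{L}_{RE}$-sentence $\mathrm{I}Y\,\psi$, where $\psi$ is an $\mathrm{ESO}$-sentence over $\tau\cup\{Y\}$, is equivalent on finite models (of size at least two — the degenerate one-element case is handled separately) to some $\mathrm{D}^*$-sentence. The strategy proceeds in two reductions. First, recall the classical Fagin-style fact that $\mathrm{ESO}$ equals dependence logic in expressive power; so pick a dependence logic sentence $\chi$ over $\tau\cup\{Y\}$ equivalent to $\psi$. Thus $\mathrm{I}Y\,\psi$ is equivalent to the statement: there is a finite nonempty set $S$ disjoint from $A$ such that $(\mathfrak{A}+S, Y\mapsto S)\models\chi$, where the underlying $\tau$-reduct of $\mathfrak{A}+S$ interprets the symbols of $\tau$ exactly as $\mathfrak{A}$ does and $Y$ picks out the fresh elements $S$.

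Second, I would use the $\mathrm{Ix}$ operator to produce the fresh elements and a fresh variable $y$ to mark them: the candidate $\mathrm{D}^*$-sentence is roughly $\mathrm{Ix}\,\mathbb{T}_Y^{y}(\chi)$ with $y$ chosen as the bloating variable (writing $x$ for the $\mathrm{Ix}$-variable and ensuring it agrees with the variable $y$ that $\mathbb{T}_Y^y$ expects to carry the set $S$). By the semantics of $\mathrm{Ix}$, we have $\mathfrak{A},\{\emptyset\}\models \mathrm{Ix}\,\mathbb{T}_Y^{y}(\chi)$ iff there is a finite bloating $\mathfrak{A}+S$ with $\mathfrak{A}+S,\{\emptyset\}[S/y]\models\mathbb{T}_Y^{y}(\chi)$. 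Now apply Lemma \ref{morespacelemma} with the model $\mathfrak{A}+S$ in place of $\mathfrak{A}$: its hypothesis $|A\cup S|\geq 2$ holds, $S$ is a nonempty finite subset of the domain, $y$ does not occur in $\chi$, and $Y$ occurs neither in $\chi$ nor in the vocabulary of $\mathfrak{A}+S$. The Lemma then gives $\mathfrak{A}+S,\{\emptyset\}[S/y]\models\mathbb{T}_Y^{y}(\chi)$ iff $\bigl(\mathfrak{A}+S, Y\mapsto S\bigr),\{\emptyset\}\models\chi$, which is exactly the condition characterizing $\mathfrak{A}\models\mathrm{I}Y\,\psi$. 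Chaining these equivalences finishes the main case.

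The remaining loose ends are bookkeeping rather than genuine obstacles, but they deserve care. One must verify that the $\mathrm{Ix}$-variable and the marker variable $y$ can be identified: inspecting the semantics of $\mathrm{Ix}\,\varphi$ (which forms $U[S/x]$) against the statement of Lemma \ref{morespacelemma} (which uses $\{\emptyset\}[S/y]$), it suffices to rename so that the $\mathrm{Ix}$ binds precisely the variable $\mathbb{T}_Y^y$ treats as the repository of $S$; since $\{\emptyset\}[S/y]$ is literally $\mathrm{Ix}$'s output on the empty team when the $\mathrm{Ix}$-variable is $y$, this is immediate. One also needs the auxiliary variables $v,u,u'$ hidden inside $\mathbb{T}_Y^y$ to be fresh for $\chi$, which is part of the translation's standing assumptions and can always be arranged by renaming. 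Finally the size-one case: a finite $\tau$-structure with a single element can only be in the definable class in trivial circumstances, and one can either incorporate it by a disjunct $\exists x\exists x'(x\neq x')\to(\ldots)$ style guard or observe directly that bloating a one-element model already yields $|A\cup S|\geq 2$, so in fact no separate treatment is needed once $S\neq\emptyset$ is guaranteed — which it is, since $\mathcal{L}_{RE}$ requires $Y$ nonempty and $\mathrm{Ix}$ adds a nonempty $S$.

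The main conceptual point — and the only place where something nontrivial is invoked — is the passage from $\mathrm{ESO}$ to dependence logic, i.e., Väänänen's theorem that $\mathrm{D}$ captures $\mathrm{NP}$; everything after that is the translation $\mathbb{T}_Y^y$ and the semantics of $\mathrm{Ix}$, both already set up. So I expect no real obstacle: the proof is a two-line chain of equivalences once Lemma \ref{morespacelemma} and the $\mathrm{D}=\mathrm{ESO}$ equivalence are in hand, and the write-up is mainly about stating the candidate sentence $\mathrm{Ix}\,\mathbb{T}_Y^{y}(\chi)$ cleanly and citing the two ingredients.
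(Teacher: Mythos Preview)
Your proposal is correct and follows essentially the same route as the paper: translate the $\mathrm{ESO}$-part to dependence logic via V\"a\"an\"anen's theorem, apply the translation $\mathbb{T}_Y^y$ from Lemma~\ref{morespacelemma} (with $\mathfrak{A}+S$ playing the role of the model), and wrap the result in $\mathrm{I}y$; the paper presents exactly this chain of equivalences. Your observation that the size-one case needs no special treatment because bloating always yields $|A\cup S|\geq 2$ is correct and in fact the paper does not even pause on it.
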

\begin{proof}
It is well known that every sentence $\alpha$ of $\mathrm{ESO}$
translates to an equivalent sentence $\alpha^{\#}$ of dependence logic, see \cite{va07}.
%
%
%
We shall use this translation below.
Let $\varphi\, :=\, \mathrm{I} Y\, \overline{\exists X}\psi$
be a sentence of $\mathcal{L}_{RE}$, where $\psi$
is a first-order sentence.
%
%
%
%
%
%
The following chain of equivalences, where
the penultimate equivalence follows by Lemma \ref{morespacelemma},
settles the current theorem.
\begin{align*}
\mathfrak{A}\models\varphi & \Leftrightarrow \bigl(\mathfrak{A} + S,Y\mapsto S\bigr) \models \overline{\exists X}\psi
\text{ for some finite }S\not=\emptyset\text{ s.t. }S\cap A = \emptyset\\ 
&\Leftrightarrow
\bigl(\mathfrak{A} + S,Y\mapsto S\bigr),\{\emptyset\} \models \bigl(\overline{\exists X}\psi\bigr)^{\#}
\text{ for some finite }S\not=\emptyset\text{ s.t. }S\cap A = \emptyset\\
&\Leftrightarrow
\mathfrak{A} + S,\{\emptyset\}[S/y] \models
\mathbb{T}_Y^{y}\bigl(\bigl(\overline{\exists X}\psi\bigr)^{\#}\bigr)
\text{ for some finite }S\not=\emptyset\text{ s.t. }S\cap A = \emptyset\\
&\Leftrightarrow
\mathfrak{A},\{\emptyset\}\ \models\
\mathrm{I}y\, \mathbb{T}_Y^{\empty\hspace{0.3mm}y}\bigl(\bigl(\overline{\exists X}\psi\bigr)^{\#}\bigr)\
\end{align*}
%
%
%
%
\end{proof}
\begin{theorem}
$\mathrm{D}^*$ is contained in $\mathcal{L}_{RE}$.
\end{theorem}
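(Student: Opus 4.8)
The plan is to establish the reverse containment by showing that each $\mathrm{D}^*$-sentence can be effectively translated into an equivalent $\mathcal{L}_{RE}$-sentence, and then invoking the fact (from the first Proposition) that $\mathcal{L}_{RE}$ captures exactly the recursively enumerable classes, which in particular means it is closed under the operations we will need. The cleanest route, however, is to avoid a direct syntactic translation and instead argue semantically: every class of finite models defined by a $\mathrm{D}^*$-sentence is recursively enumerable, hence definable in $\mathcal{L}_{RE}$ by the Proposition. So the real content is to verify that $\{\mathfrak{A} : \mathfrak{A}\models\varphi\}$ is r.e.\ for every $\mathrm{D}^*$-sentence $\varphi$.

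First I would recall that $\mathrm{D}^+$ (dependence, independence, inclusion, exclusion atoms over $\mathrm{FO}$ in negation normal form) is, over any fixed finite domain, decidable: given a model $\mathfrak{A}$ and a team $U$ with finite domain, the team semantics clauses for $\mathrm{D}^+$ only quantify over the finitely many subteams of $U$, the finitely many functions $f:U\to\mathcal{P}(A)$, and the finitely many assignments, so $\mathfrak{A},U\models\vartheta$ is a decidable relation uniformly in $(\mathfrak{A},U,\vartheta)$ for $\vartheta\in\mathrm{D}^+$. (Equivalently one may cite that $\mathrm{D}^+\subseteq\mathrm{ESO}$ over a fixed domain, and $\mathrm{ESO}$ model-checking is decidable on finite structures.) Then I would handle the $\mathrm{Ix}$ operator: $\mathfrak{A},U\models\mathrm{Ix}\,\vartheta$ iff there exists a finite bloating $\mathfrak{A}+S$ with $\mathfrak{A}+S,U[S/x]\models\vartheta$, and by the ``canonical bloating'' remark at the end of Section~\ref{preliminaries} it suffices to search over $S$ of size $1,2,3,\dots$. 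An arbitrary $\mathrm{D}^*$-sentence $\varphi$ has finitely many occurrences of $\mathrm{Ix}$, each bounding a subformula; pushing the semantics through, $\mathfrak{A}\models\varphi$ holds iff there exists a tuple of sizes (one per $\mathrm{Ix}$-occurrence, in the appropriate nesting order) such that the corresponding bloated model satisfies the $\mathrm{Ix}$-free $\mathrm{D}^+$-part under the induced team. This gives a semi-decision procedure: dovetail over all finite tuples of bloating sizes, and for each, run the decidable $\mathrm{D}^+$ model-checking on the (finite) bloated model and team; accept if it succeeds. Hence the class defined by $\varphi$ is r.e.

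Finally, by the Proposition that $\mathcal{L}_{RE}$ defines in the finite exactly the r.e.\ classes of models, there is an $\mathcal{L}_{RE}$-sentence defining the same class as $\varphi$, so $\mathrm{D}^*\subseteq\mathcal{L}_{RE}$. The main obstacle, and the only place requiring genuine care, is the bookkeeping for nested and non-nested $\mathrm{Ix}$-operators: one must check that the semantics of $\mathrm{Ix}$ composes correctly with the propagation of teams through connectives and quantifiers, so that ``$\mathfrak{A}\models\varphi$'' really is equivalent to an existential statement over a finite tuple of bloating sizes together with decidable $\mathrm{D}^+$-checking on a single finite structure — in particular that a bloating introduced by an inner $\mathrm{Ix}$ lives in the domain already enlarged by the outer ones, and that the fresh elements never need to be reused or have their count depend on anything infinitary. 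Once this compositional claim is made precise (a straightforward induction on $\varphi$, tracking for each subformula the finitely many size-parameters it governs), the semi-decision procedure and the appeal to the Proposition complete the proof. As an alternative to invoking the Proposition, one could instead give the explicit syntactic translation mirroring the proof of the previous theorem — replacing each $\mathrm{Ix}$ by an $\mathrm{I}$-quantifier with a fresh unary predicate and using Lemma~\ref{seven}/Lemma~\ref{morespacelemma} in the other direction to read the inclusion/exclusion/independence encodings back as a genuine relation — but the semantic route is shorter and fully sufficient.
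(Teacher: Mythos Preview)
Your proposal is correct and takes essentially the same approach as the paper: nondeterministically guess a finite tuple of bloating sizes (one per occurrence of $\mathrm{Ix}$), decidably check the resulting $\mathrm{D}^+$-evaluation on the bloated finite structure, conclude that the defined class is r.e., and invoke the Proposition that $\mathcal{L}_{RE}$ captures r.e.\ classes. The paper's proof is simply a terser version of exactly this argument; your extra care about decidability of $\mathrm{D}^+$ model-checking and the bookkeeping for nested $\mathrm{Ix}$-operators fills in details the paper leaves implicit.
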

\begin{proof}
Let $\varphi$ be a sentence of $\mathrm{D}^*$.
Assume $\varphi$ contains
$k$ occurrences of the operator $\mathrm{I}$.
Let $\mathrm{TM}$ be a Turing machine such 
that when given an input model $\mathfrak{A}$, $\mathrm{TM}$
first nondeterministically constructs a tuple
$\overline{n}\in({\mathbb{Z}_+})^k$ that gives for each
occurrence of $\mathrm{I}$ in $\varphi$ a number of
new points to be added to the model. Then $\mathrm{TM}$
checks whether $\mathfrak{A}$ satisfies $\varphi$ with the
given tuple $\overline{n}$ of cardinalites to be added during
the evaluation. $\mathrm{TM}$ is a semi-decision algorithm
corresponding to $\varphi$.
%
%
\end{proof}

\vspace{-5mm}

\section{Eliminating Monotone Generalized Quantifiers}
%
%

%
In this section we show that monotone
quantifiers of type $(1)$
can be eliminated from
quantifier extensions of $\mathrm{FO}$ by
introducing generalized
atoms that are canonically
similar to the quantifiers.
Let $Q$ be a monotone type $(1)$ quantifier
and $\overline{y}$ a tuple of variables of
length $k\in\mathbb{N}$.
We let $A_{Q}(\overline{y},x)$
denote the type $(k+1)$ atom defined such that
$\mathfrak{A},U\models A_{Q}(\overline{y},x)$
iff $\mathit{Rel}(U_0,x)\in Q^A$ for each maximal nonempty team $U_0\subseteq U$
with the property $\forall s,t\in U_0\bigl(s(\overline{y}) = t(\overline{y})\bigr)$.
%
%
We call this atom the \emph{$k$-atom induced by $Q$}.
Let $\varphi$ be a formula of $\mathrm{FO}$, possibly extended with
monotone quantifiers of type $(1)$.
%
%
%
Recall from Section \ref{prelies}
that $\neg Qx\, \varphi$ is equivalent to $Q^d\, x\, \neg\varphi$,
and if $Q$ is a  is a monotone type $(1)$ quantifier,
then so is $Q^d$.
Let $Q$ be a monotone type $(1)$ quantifier.
We let $\mathrm{FO}(Q)$ denote first-order 
logic in negation normal form and extended by
the quantifiers $Q$ and $Q^d$.
We let $\mathrm{FO}(\mathcal{A}_Q)$ denote
first-order logic in negation normal form and extended
by all $k$-atoms induced by $Q$ and $Q'$ for all $k\in\mathbb{N}$.
We define a translation of formulae of $\mathrm{FO}(Q)$
into $\mathrm{FO}(\mathcal{A}_Q)$ as follows.
Let $\alpha$ be a formula of $\mathrm{FO}(Q)$.
%
%
%
First, if necessary, rename bound variables of $\alpha$
so that no free variable is also a bound variable.
Then remove all nested quantification of the same variable
by renaming variables; this means that in the resulting
formula, no quantifier quantifying $x$ is not allowed to
be in the scope of another quantifier quantifying the same
variable $x$. We call the resulting formula \emph{clean}.
We then translate the \emph{clean} formula 
$\beta$ to a formula $\beta^*$, where
$(\cdot)^*$ is defined by the rules below.
Note that $\exists$ is a type $(1)$
monotone generalized quantifier and $\exists^d = \forall$,
so the case for $Q$ below covers also $\exists$ and $\forall$.
\begin{enumerate}
\item
For atoms and negated atoms, $\varphi^* :=\varphi$.
\item
$(\, (\varphi\wedge\psi)\, )^*\, :=\, (\ \varphi^* \wedge
\psi^*\, )$\ and\ $(\, (\varphi\vee\psi)\, )^*\, :=\, (\ \varphi^*\vee \psi^*\, )$.
\item
$(\, Qx\, \varphi\, )^*\ :=\ \forall x\bigl(\, (A_{Q}(\overline{y},x)
\wedge \varphi^*)\vee A_{Q'}(\overline{y},x)\, \bigr),$
where $\overline{y}$ contains exactly all variables quantified superordinate
to $Qx\, \varphi$ in the original clean formula $\beta$, and also
the free variables of $\beta$.
\end{enumerate}
Note that, interestingly, \emph{we eliminated the use of\, $\exists$ altogether}.
\begin{theorem}
Let $Q$ be a generalized quantifier of type $(1)$.
Every formula of $\chi$ of\, $\mathrm{FO}(Q)$ can be effectively
translated into a formula $\chi^*$ of\, $\mathrm{FO}(\mathcal{A}_Q)$
such that $\mathfrak{A},U\models\chi\ \Leftrightarrow\ \mathfrak{A},U\models\chi^*$
for all $\mathfrak{A}$ and $U$.
\end{theorem}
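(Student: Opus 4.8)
The plan is to prove the equivalence $\mathfrak{A},U\models\chi\Leftrightarrow\mathfrak{A},U\models\chi^*$ by induction on the structure of the clean formula $\beta$ obtained from $\chi$ by the renaming steps; since cleaning preserves meaning in team semantics (a routine fact about alphabetic variants), it suffices to prove the equivalence for clean formulas. The base case (atoms and negated atoms) is immediate because $(\cdot)^*$ is the identity there, and the cases for $\wedge$ and $\vee$ are immediate from the induction hypothesis since $(\cdot)^*$ commutes with both connectives and the team semantics of $\wedge,\vee$ refers only to the immediate subformulas. So the whole content is in the quantifier case $Qx\,\varphi$.

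For that case, first I would record the relevant structural fact about a clean formula: if $\overline{y}$ lists all variables quantified superordinate to $Qx\,\varphi$ together with the free variables of $\beta$, then in any team $U$ arising during the evaluation of $\beta$ at which $Qx\,\varphi$ is reached, the domain of $U$ is exactly the set of variables in $\overline{y}$ (cleanness guarantees no variable is requantified, so the superordinate quantifiers have pinned down precisely these variables). Next I would unwind the team semantics of the translation. We have $\mathfrak{A},U\models(Qx\,\varphi)^*$ iff $\mathfrak{A},U[A/x]\models (A_Q(\overline{y},x)\wedge\varphi^*)\vee A_{Q'}(\overline{y},x)$, i.e. iff $U[A/x]$ splits as $U_1\cup U_2$ with $\mathfrak{A},U_1\models A_Q(\overline{y},x)$, $\mathfrak{A},U_1\models\varphi^*$, and $\mathfrak{A},U_2\models A_{Q'}(\overline{y},x)$. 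The key observation is that, because every $s\in U$ gives rise to exactly the block $\{s[a/x]\mid a\in A\}$ in $U[A/x]$ and these blocks are the maximal subteams constant on $\overline{y}$, the split $U_1\cup U_2$ is forced to respect blocks in the following sense: for each $s\in U$, writing $T_s=\{a\in A\mid s[a/x]\in U_1\}$, the atom $A_Q(\overline{y},x)$ on $U_1$ says $T_s\in Q^A$ for every $s$ with $U_1$ meeting $s$'s block, while $A_{Q'}(\overline{y},x)$ on $U_2$ says $A\setminus T_s'\in Q^A$ where $T_s'=\{a\mid s[a/x]\in U_2\}$ and $A\setminus T_s'\subseteq T_s$. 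Using that $U_1\cup U_2=U[A/x]$ gives $T_s\cup T_s'=A$, hence $A\setminus T_s'\subseteq T_s$, and monotonicity of $Q$ upgrades $A\setminus T_s'\in Q^A$ to $T_s\in Q^A$. Conversely, given a function $f:U\to(\mathcal P(A)\setminus\emptyset)$ witnessing $\mathfrak{A},U\models Qx\,\varphi$ in the sense that $\{a\mid \mathfrak{A},s[a/x]\models\text{(truth of }\varphi\text{)}\}\in Q^A$ — more precisely, using Proposition~\ref{first-ordercorrespondence} and the definition of $Qx$ in team semantics — I would set $U_1:=U[f/x]$, $U_2:=\{s[a/x]\mid s\in U,\ a\notin f(s)\}$, check $U_1\cup U_2=U[A/x]$, verify $\mathfrak{A},U_1\models A_Q(\overline{y},x)$ directly from $f(s)\in Q^A$, verify $A\setminus(A\setminus f(s))=f(s)\in Q^A$ so $\mathfrak{A},U_2\models A_{Q'}(\overline{y},x)$, and invoke the induction hypothesis on $\varphi$ with team $U_1$ to get $\mathfrak{A},U_1\models\varphi^*$.

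The main obstacle I anticipate is a bookkeeping subtlety rather than a conceptual one: making the split argument airtight when several distinct $s,s'\in U$ happen to agree on $\overline{y}$. A priori two assignments with the same $\overline{y}$-values could differ on some variable not in $\overline{y}$, which would merge their blocks in $U[A/x]$ and complicate the definition of $T_s$. This is exactly why the cleanness normalization and the precise choice of $\overline{y}$ (all superordinate bound variables \emph{and} the free variables) matter: I would argue that at the point of evaluation the domain of $U$ is contained in (indeed equals) the variables of $\overline{y}$, so distinct assignments in $U$ already differ on $\overline{y}$ and the blocks of $U[A/x]$ are in bijection with $U$. Once that is pinned down, the monotonicity step is the only place where the hypothesis on $Q$ is used, and it is used precisely to pass from the ``$A_{Q'}$ on the complement'' direction back to ``$A_Q$ on the larger set'', which is the reason the disjunction with $A_{Q'}(\overline{y},x)$ is needed at all (it absorbs the ``leftover'' elements of $A$ into a second team while keeping the split within team semantics, where one cannot simply restrict $U[A/x]$ to $U[f/x]$). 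I would also note explicitly that the elimination of $\exists$ works because $\exists^d=\forall$ and both are monotone type $(1)$ quantifiers with $\exists^A$ the set of nonempty subsets of $A$ and $(\exists')^A=\{B\mid A\setminus B\ne\emptyset\}$, so the general quantifier case specializes correctly; this is worth a remark but requires no separate argument.
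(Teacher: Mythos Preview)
Your proof is correct and follows the same overall architecture as the paper: induction on the clean formula $\beta$, with all the work concentrated in the quantifier case, where $U[A/x]$ is split into a part carrying $A_Q(\overline{y},x)\wedge\varphi^*$ and a complementary part carrying $A_{Q'}(\overline{y},x)$. Your identification of the domain issue (that one must have $\mathit{Dom}(U)$ equal to the variables in $\overline{y}$ so that the $\overline{y}$-constant blocks of $U[A/x]$ are in bijection with $U$) is exactly the bookkeeping point the translation is designed around.

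There is, however, a genuine methodological difference worth noting. In the backward direction the paper does \emph{not} build a global witness $f:U\to Q^A$ from the split. Instead it invokes Proposition~\ref{first-ordercorrespondence} (flatness of $\mathrm{FO}(Q)$) and argues pointwise: fix an arbitrary $s\in U$ and show $\mathfrak{A},\{s\}\models Qx\,\varphi$, distinguishing whether $s$'s block meets the left disjunct $V$ (then the $x$-values $B$ of that block lie in $Q^A$ and $s[B/x]\subseteq V\models\varphi$) or lies entirely in the right disjunct $X$ (then $A\in Q'^{A}$, so $\emptyset\in Q^A$, and monotonicity gives $Q^A=\mathcal{P}(A)$, making $Qx\,\varphi$ trivially true at $s$). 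Your route instead stays entirely inside team semantics: you set $f(s)=T_s$, observe $U[f/x]=U_1$, and use the induction hypothesis on $U_1$ directly. Both use monotonicity at essentially the same spot (the block of some $s$ missing $U_1$), but your argument avoids the detour through Tarskian semantics; the paper's pointwise argument, on the other hand, makes it transparent that the translated formula is still flat. Either way the proof goes through.
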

\begin{proof}
Let $\beta$ be a clean formula of $\mathrm{FO}(Q)$.
We will show by induction on the structure of $\beta$
that 
$\mathfrak{A},U\models\varphi\, \Leftrightarrow\,
\mathfrak{A},U\models\varphi^*$\ \
%
%
%
%
%
%
for all subformulae $\varphi$ of $\beta$.
%
%
%

%
The cases for atomic and negated atomic formulae are clear,
as are the cases for $\wedge$ and $\vee$.
We discuss the quantifier case in detail.
Assume
$\mathfrak{A},U\models Qx\, \varphi.$
We assume that $U\not=\emptyset$;
the case for $U=\emptyset$ is straightforward,
as long as it is kept in mind that $\emptyset$ is a function
with domain $\emptyset$.
Let $s\in U$ be an arbitrary assignment.
By Proposition \ref{first-ordercorrespondence},
we have $\mathfrak{A},s\models_{\mathrm{FO}}Qx\, \varphi$.
Let $S(s)$ be the set of \emph{exactly all} assignments $s[a/x]$ ($a\in A$)
such that $\mathfrak{A},s[a/x]\models_{\mathrm{FO}}\varphi$.
As $\mathfrak{A},s\models_{\mathrm{FO}}Qx\, \varphi$,
we have $A(s)\, :=\, \{\ a\in A\ |\ s[a/x]\in S(s)\ \}\in Q^A$.
Notice that $A\setminus A(s)\in  Q'^{A}$.
Define the function $f:U\rightarrow Q^A$ such
that $f(s) = A(s)$ for each $s\in U$.
By Proposition \ref{first-ordercorrespondence},
we have $\mathfrak{A},U[f/x]\models\varphi$.
Let $g:U\rightarrow\mathcal{P}(A)$ be
the function such that $g(s) = A\setminus f(s)$ for all $s\in U$.
%
%
%
%
%
%

%
Since $\mathfrak{A},U[f/x]\models\varphi$,
we have $\mathfrak{A},U[f/x]\models\varphi^*$
by the induction hypothesis.
Since $f$ maps from $U$ to $Q^A$, we have
%
%
%
%
$\mathfrak{A},U[f/x]\models A_{Q}(\overline{y},x).$
Thus
$\mathfrak{A},U[f/x]\models A_{Q}(\overline{y},x)\wedge\varphi^*.$
To show that $\mathfrak{A},U\models (Qx\, \varphi)^*$, it
we must prove that
\begin{align}\label{def}
\mathfrak{A},U[A/x]\models (A_{Q}(\overline{y},x)
\wedge \varphi^*)\vee A_{Q'}(\overline{y},x).
\end{align}
%
%
%
%
%
%
%

%
Since $U[f/x]\cup U[g/x] = U[A/x]$, it now suffices to show that 
$\mathfrak{A},U[g/x]\models A_{Q'}(\overline{y},x)$.
%
%
%
%
%
%
Recall  that $A\setminus A(s)\in Q'^{A}$ for each $s\in U$.
Thus, by the definition of $g$, we have $\mathfrak{A},U[g/x]\models A_{Q'}(\overline{y},x)$.
Therefore $\mathfrak{A},U[g/x]\models A_{Q'}(\overline{y},x)$.
%
%

%
%
%
%
%
%
%
%
%
%
%
%

%
%
%
Assume then that $\mathfrak{A},U\models (Qx\, \varphi)^*$.
Therefore Equation \ref{def} holds.
%
%
%
%
%
%
%
%
%
%
We assume that $U\not=\emptyset$,
for otherwise we are done.
Choose an arbitrary assignment $s\in U$.
We will show that $\mathfrak{A},\{s\}\models Qx\, \varphi$.
Due to Proposition \ref{first-ordercorrespondence},
and since we choose $s$ arbitrarily,
this will directly imply that $\mathfrak{A},U\models Qx\, \varphi$.
Since Equation \ref{def} holds,
we infer that $\mathfrak{A},V\models A_{Q}(\overline{y},x)
\wedge \varphi^*$
and $\mathfrak{A},X\models A_{Q'}(\overline{y},x)$
for some teams $V$ and $X$ such that $V\cup X =  U[A/x]$.
%
%

%
Let $V_{-x}$ denote the team with domain $\mathit{Dom}(V)\setminus\{x\}$
such that for all assignments $t$, we have $t\in V_{-x}$ iff there exists
some element $a\in A$ such that $t[a/x]\in V$.
Assume first that $s\in V_{-x}$, where $s$ is the 
assignment in $U$ we appointed above.
As $\mathfrak{A},V\models A_{Q}(\overline{y},x)$,
we see that there is a set $B\in Q^A$
such that $s[B/x]\subseteq V$.
As $\mathfrak{A},V\models\varphi^*$,
we have $\mathfrak{A},V\models\varphi$
by the induction hypothesis.
Therefore, using Proposition \ref{first-ordercorrespondence},
we infer that $\mathfrak{A},s[B/x]\models\varphi$.
Thus $\mathfrak{A},\{s\}\models Qx\, \varphi$, as desired.
Assume then that $s\not\in V_{-x}$. Thus $s\in X_{-x}$,
where $X_{-x}$ is of course the team with domain $\mathit{Dom}(X)\setminus\{x\}$
such that for all assignments $t$, we have $t\in X_{-x}$ iff there is 
some $a\in A$ such that $t[a/x]\in X$.
Let $S$ be the set of all assignments $t\in X$
such that there is some $a\in A$ such that $t = s[a/x]$.
Since $s\not\in V_{-x}$ and $V\cup X = U[A/x]$, we have $S = s[A/x]$.
As $\mathfrak{A},X\models A_{Q'}(\overline{y},x)$,
we therefore see that $A\in {Q'}^{A}$.
Thus $\emptyset\in Q^{A}$. Thus, as $Q$ is monotone,
we have $Q^A = \mathcal{P}(A)$.
Thus trivially $\mathfrak{A},s\models_{\mathrm{FO}} Qx\, \varphi$,
whence $\mathfrak{A},\{s\}\models Qx\, \varphi$
by Proposition \ref{first-ordercorrespondence}.
\end{proof}
\section{Conclusions}
We have shown how the standard logics based on team semantics extend
naturally to the simple system $\mathrm{D}^*$
that captures $\mathrm{RE}$. The system $\mathrm{D}^*$ nicely
expands the scope of team semantics from logic to computation.
%
%
%
It will be interesting to investigate, for example,
\emph{what kind of decidable fragments $\mathrm{D}^*$ has.}
%
%
%
%
%
%

\bibliographystyle{plain}
\bibliography{tampere}

\end{document}